\documentclass[12pt]{article}    
    
\usepackage{latexsym,amsfonts,amsmath,amssymb,wasysym,stmaryrd,enumerate,epsfig}    
\usepackage{mathrsfs}    
\usepackage{theorem}    
\usepackage[utf8]{inputenc}    
\usepackage{graphicx}    
\usepackage{pstricks}    
\usepackage{amsmath}    
\usepackage{tikz}    
\usetikzlibrary{matrix,arrows,shapes}    
    
\numberwithin{equation}{section}

\newtheorem{definition}{Definition}[section]

\newtheorem{proposition}[definition]{Proposition}    
\newtheorem{theorem}[definition]{Theorem}    
\newtheorem{corollary}[definition]{Corollary}    
\newtheorem{lemma}[definition]{Lemma}

\newcommand{\be}{\begin{equation}}    
\newcommand{\ee}{\end{equation}}    
\newcommand{\beu}{\begin{equation*}}    
\newcommand{\eeu}{\end{equation*}}    
\newcommand{\bea}{\begin{eqnarray}}    
\newcommand{\eea}{\end{eqnarray}}    
\newcommand{\beaa}{\begin{eqnarray*}}    
\newcommand{\eeaa}{\end{eqnarray*}}    
\newcommand{\bmx}{\begin{pmatrix}}    
\newcommand{\emx}{\end{pmatrix}}

\newcommand{\g}{{\mathfrak g}}

\newcommand{\finproof}{{\hfill \rule{5pt}{5pt}}}

\newcommand{\nn}{\nonumber}

\newcommand{\rank}{{\rm rank}}

\newcommand{\ket}[1]{{\,\left|#1\right>}\,}

\newcommand{\bl}{{\bullet}}    
\newcommand{\wh}{{\circ}}    
\newcommand{\disjoint}{\sqcup}    
\newcommand{\Zh}{\mathbb Z/h \mathbb Z}    
\newcommand{\bw}{{\stackrel{\bl}{\wh}}}    
\newcommand{\wb}{{\stackrel{\wh}{\bl}}}    
\newcommand{\nbr}[2]{\langle #1,#2 \rangle}    
\newcommand{\uqsl}[1]{{U_q(\widehat{\mathfrak{sl}}_2{}^{(#1)})}}    
\newcommand{\uqslt}{{  U_q(\widehat{\mathfrak{sl}}_2})}    
\newcommand{\uqgh}{{U_q(\widehat\g)}}    
\newcommand{\uqg}{{U_q(\g)}}    
\newcommand{\Yg}{{Y(\g)}}    
    
\newcommand{\YY}[2]{Y_{#1,aq^{#2}}}    
\newcommand{\MM}[2]{Y_{#1,aq^{#2}}^{-1}}    
\newcommand{\cq}{\chi_q(V_{i,a})}    
\newcommand{\cqo}{\chi_q(V_{i_1,a})}

\newcommand{\goin}[2]{#1_#2}    
\newcommand{\goi}[2]{=}    
\newcommand{\Hom}{\mathrm{Hom}}    
\newcommand{\rr}{\overset{=}{\mapsto}}    
\newcommand{\Ih}{\hat I}    
\newcommand{\Ihb}{\hat I_\bl}    
\newcommand{\Ihw}{\hat I_\wh}    
\newcommand{\y}[1]{\mathscr Y_{#1}}    
\newcommand{\aaa}[1]{\mathscr A_{#1}}

\newcommand{\on}{\triangleright}    
\newcommand{\rep}[1]{{\mathscr{R}\! ep(#1)}}    
\newcommand{\groth}[1]{{\mathrm{Rep}(#1)}}    
\newcommand{\Cx}{{\mathbb C_{\!\neq 0}}}    
    
\newcommand{\btp}{\begin{tikzpicture}[baseline=0pt,scale=0.9,line width=0.25pt]}    
\newcommand{\etp}{\end{tikzpicture}}    
%
%
\topmargin -36 pt     
    \textheight 43\baselineskip    
\advance\textheight by \topskip    
%
    \oddsidemargin -10 pt      
    \evensidemargin 10 pt      
    \marginparwidth 1 in       
    \oddsidemargin 0 in  
    \evensidemargin 0 in    
    \marginparwidth 0.75 in    
    \textwidth 6.375 true in 
    
\textheight 9in    
\begin{document}    
    
\baselineskip 17.5pt    
\parindent 18pt    
\parskip 8pt

\begin{titlepage}    
\begin{flushright}    
{\bf \today} \\    
YITP-09-64\\    
DCPT-09/71\\  
LPT-09-83  
\end{flushright}    
\begin{centering}    
\vspace{.2in}

{\Large {\bf     
Dorey's Rule and the q-Characters of     
    
Simply-Laced Quantum Affine Algebras    
}}    
    
\vspace{.3in}    

{\large C. A. S. Young ${}^{1}$ and R. Zegers ${}^2$}\\    
\vspace{.2 in}    
${}^{1}$ {\emph{Yukawa Institute for Theoretical Physics\\  
Kyoto University, Kyoto, 606-8502, Japan}}\\    
\vspace{.2 in}    
${}^{2}$ {\emph{Laboratoire de Physique Th\'eorique\\ Universit\'e Paris-Sud 11/CNRS\\ 91405 Orsay Cedex, France}}    
    
%
\footnotetext[1]{{\tt charlesyoung@cantab.net,}\quad ${}^{2}${\tt robin.zegers@th.u-psud.fr}}    
\vspace{.5in}    
    
{\bf Abstract}    
    
\vspace{.1in}

\end{centering}

{Let $\uqgh$ be the quantum affine algebra associated to a simply-laced simple Lie algebra $\g$.    
We examine the relationship between Dorey's rule, which is a geometrical statement about Coxeter orbits of $\g$-weights, and the structure of $q$-characters of fundamental representations $V_{i,a}$ of $\uqgh$. In particular, we prove, without recourse to the ADE classification, that the rule provides a necessary and sufficient condition for the monomial $1$ to appear in the $q$-character of a three-fold tensor product $V_{i,a}\otimes V_{j,b} \otimes V_{k,c}$.}

\end{titlepage}

\begin{flushright}    
\break

\end{flushright}    
\vspace{1cm}    
    
\section{Introduction}    
\subsection{Background}    
This paper concerns the relationship between the representation theory of simply-laced quantum affine algebras on the one hand, and, on the other, the particle fusing rule originally given by Dorey in the context of affine Toda field theories.     
    
Recall that Affine Toda Field Theories (ATFTs) are integrable quantum field theories in 1+1 dimensions \cite{CorriganReview}. Let $\g$ be any simply-laced simple Lie algebra, and $I$ the set of nodes of the Dynkin diagram of $\g$. The (real coupling, purely elastic) ATFT associated to the untwisted affine algebra $\widehat\g$ has $\rank \,\g\,$ species of particles, labelled by the nodes $i\in I$. The root system data of $\g$ determine not only the masses of these particles, but also the allowed \emph{fusings}: if particles of species $j\in I$ and $i\in I$ can interact to form a particle of species $\bar k\in I$ one says there is a fusing $j,i \rightarrow \bar k$, and this process can occur only if the rapidities $\theta_i,\theta_j$ of the incoming particles are related by    
\be \theta_i - \theta_j = \sqrt{-1}\,\, \theta^k_{ji} \ee     
where $\theta^k_{ji}$ is a real angle, called the \emph{fusing angle}. If there is a fusing $j,i\rightarrow \bar k$ then there are also fusings $i,k\rightarrow \bar\jmath$ and $k,j \rightarrow \bar\imath$, and the fusing angles obey    
\be \theta^k_{ji} + \theta^j_{ik} + \theta^i_{kj} = 2\pi.\ee     
    
The problem of determining the masses, fusings and fusing angles for the ATFTs associated to all simple Lie algebras (simply-laced or not) was solved in \cite{BCDS}. It was observed in that paper that the allowed fusings form a strict subset of the non-zero Clebsh-Gordon coefficients for $\g$, in the sense that if $i,j\rightarrow \bar k$ is a fusing then     
\be \Hom_{\g} \left( V_i \otimes V_j, V_{\bar k} \right) \cong \Hom_{\g}\left( V_i \otimes V_j \otimes V_k, \mathbb C\right) \neq 0,\label{ghom}\ee    
where $V_i$ is the $i$th fundamental representation of $\g$. It is a \emph{strict} subset because the converse statement does not hold: the first counterexample is $D_5$,     
\be\nn\btp \draw[draw=white,double=black,very thick] (1,0) -- ++(2,0) -- ++(60:1) ++(60:-1) -- ++(-60:1) ;    
\filldraw[fill=white] (1,0) circle (1mm) node [below] {$1$};    
\filldraw[fill=white] (3,0) circle (1mm) node [right] {$3$};    
\filldraw[fill=black] (2,0) circle (1mm)  node [below] {$2$};    
\filldraw[fill=black] (3,0)++(60:1) circle (1mm)  node [right] {$4$};;    
\filldraw[fill=black] (3,0)++(-60:1) circle (1mm) node [right] {$5$};;    
\etp\ee    
where there is a non-trivial homomorphism $V_2 \otimes V_2 \rightarrow V_2$ of $\mathfrak d_5$ modules but no fusing $2,2\rightarrow 2$ in the ATFT. Soon after, this same ``hole'' in the allowed interactions was also found in a different (and non-diagonal) scattering theory \cite{M1}, giving an indication of a more general underlying structure.    
    
Subsequently, Dorey gave a rule which encodes both the pattern of allowed fusings, and the fusing angles, in an elegant geometrical fashion for all the simply-laced cases \cite{D,DII,Dorey:1992gr,FringLiaoOlive,FringOlive}.\footnote{A generalization of the rule to non-simply laced cases was mentioned in \cite{Dorey:1993tn} and used in \cite{CP95}; see also \cite{Oota,FKS}. In the present work we shall focus exclusively on the simply laced cases but it would be very interesting to try to prove analogous results for any simple Lie algebra.} To state the rule, we introduce some standard notation: let $(\alpha_i)_{i\in I}$ be a set of simple roots of $\g$, $(\lambda_i)_{i\in I}$ the corresponding fundamental weights, and $a_{ij}$ the Cartan matrix:    
\be \alpha_i \cdot \alpha_j = a_{ij},\qquad \alpha_i \cdot \lambda_j =    
\delta_{ij} .\ee    
Let $W$ denote the Weyl group of $\g$, generated by the reflections $(s_i)_{i\in I}$ in the simple roots. It is always possible to write $I$ as a disjoint union    
\be I= I_\bl \disjoint I_\wh\ee    
in such a way that $(I_\bl,I_\wh)$ is a two-colouring of the Dynkin diagram (as, for example, in the case $D_5$ above). Let then $w\in W$ be the choice of \emph{Coxeter element} given by\footnote{This choice will be convenient in what follows, but the rule itself is independent of the choice of Coxeter element.}    
\be w = w_\wh w_\bl, \qquad w_\wh = \prod_{i \in I_\wh} s_i,     
                     \qquad w_\bl = \prod_{i\in I_\bl} s_i,\label{wdef}\ee    
and write $\Gamma = \langle w\rangle$ for the cyclic subgroup of $W$ generated by $w$, whose order $h$ is the \emph{Coxeter number} of $\g$.     
    
Then the rule states that there is a fusing $i,j\rightarrow \bar k$ if and only if    
\be 0 \in  \Gamma \lambda_i + \Gamma \lambda_j + \Gamma \lambda_k ;\label{Dorey}\ee    
that is, if and only if there are integers $p,q,r$ such that     
\be 0 = w^p \lambda_i + w^q \lambda_j + w^r \lambda_k.\ee     
Moreover, the fusing angles, $\theta_{ij}^k$  $\theta_{jk}^i$ and  $\theta_{ki}^j$ are given by projecting this latter equation onto the $\exp\left(\pm 2\pi i/h\right)$ eigenplane of $w$, as discussed in \cite{D,DII} and recalled in section \ref{fuzsec} below. The original statement of the rule involved Coxeter orbits of \emph{roots}, but it was observed in \cite{Braden} that the statement above in terms of weights is equivalent, essentially because (one can show that) $\phi_i:= (1-w^{-1})\lambda_i$ are a linearly independent collection of roots. Writing the rule in terms of weights is suggestive, because of the following    
\begin{theorem}[PRV \cite{PRV,Kumar,Mathieu}] \label{PRVthm} A necessary and sufficient condition for    
\be \Hom_{\g}\left( V_i \otimes V_j \otimes V_k, \mathbb C\right) \neq 0 \ee      
is that    
\be 0 \in W \lambda_i + W\lambda_j + W \lambda_k .\label{PRV}\ee     
\end{theorem}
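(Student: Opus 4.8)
\emph{Proof strategy.} The plan is to prove the two implications separately after a common reduction to a two-fold tensor product. Let $\bar k$ denote the node for which $V_{\bar k}\cong V_k^{\hs}$, so that $\lambda_{\bar k}=-w_0\lambda_k$ with $w_0$ the longest element of $W$. The natural isomorphisms
\be \Hom_\g\left(V_i\otimes V_j\otimes V_k,\,\mathbb C\right)\cong\Hom_\g\left(V_i\otimes V_j,\,V_k^{\hs}\right)=\Hom_\g\left(V_i\otimes V_j,\,V_{\bar k}\right)\ee
show that the left-hand side is nonzero exactly when $V_{\bar k}$ occurs in $V_i\otimes V_j$. On the combinatorial side, $W\lambda_i+W\lambda_j$ is a $W$-stable set and $-W\lambda_k=W\lambda_{\bar k}$; since $\lambda_{\bar k}$ is the dominant representative of its orbit, condition (\ref{PRV}) is equivalent to the single membership $\lambda_{\bar k}\in W\lambda_i+W\lambda_j$. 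Thus everything reduces to the claim that $V_{\bar k}\subseteq V_i\otimes V_j$ if and only if $\lambda_{\bar k}\in W\lambda_i+W\lambda_j$.

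For the ``if'' direction I would invoke the Parthasarathy--Ranga Rao--Varadarajan conjecture, established by Kumar and Mathieu: for dominant weights $\lambda,\mu$ and any $w\in W$, the irreducible of highest weight $\overline{\lambda+w\mu}$ appears in $V_\lambda\otimes V_\mu$ with multiplicity at least one. Indeed, writing $\lambda_{\bar k}=u\lambda_i+v\lambda_j$ with $u,v\in W$ and applying $u^{-1}$, the dominant weight $\lambda_{\bar k}$ is exhibited as $\overline{\lambda_i+w\lambda_j}$ for $w=u^{-1}v$; the cited theorem then gives $V_{\bar k}\subseteq V_i\otimes V_j$ at once. This is the deep input, and I would quote rather than reprove it; were a self-contained argument required, reproving this conjecture (via Littelmann's path model, or via Demazure modules and the geometry of Schubert varieties) would itself be the main obstacle.

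The ``only if'' direction is the more delicate one to argue by hand. Here one is handed a highest-weight vector $T=\sum_a x_a\otimes y_a$ of the $V_{\bar k}$-component, with $x_a,y_a$ weight vectors of weights $\mu_a$ and $\lambda_{\bar k}-\mu_a$, and one must produce a single summand in which \emph{both} $\mu_a\in W\lambda_i$ and $\lambda_{\bar k}-\mu_a\in W\lambda_j$ are extremal, i.e. of maximal length in their respective weight systems. This is precisely where the hypothesis that $\lambda_i,\lambda_j$ are fundamental is indispensable, since the equivalence is false for general dominant weights: for $\g=\mathfrak{sl}_2$ one has $V_{2\lambda_1}\subseteq V_{2\lambda_1}\otimes V_{2\lambda_1}$ (and $V_{2\lambda_1}$ is self-dual), yet a sum of three elements of $W(2\lambda_1)=\{\pm2\lambda_1\}$ is always an odd multiple of $2\lambda_1$, so $0\notin W(2\lambda_1)+W(2\lambda_1)+W(2\lambda_1)$. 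I would therefore isolate the purely combinatorial statement that a fundamental weight arising as the highest weight of a component of $V_i\otimes V_j$ must lie in $W\lambda_i+W\lambda_j$, and prove it by analysing the extremal weights supporting $T$. Establishing this extremal-support property, with the fundamentality of $\lambda_i,\lambda_j$ doing the essential work, is the step I expect to demand the most care.
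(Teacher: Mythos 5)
This statement is quoted in the paper as a known background result, with citations to Parthasarathy--Ranga Rao--Varadarajan and to Kumar and Mathieu; the paper supplies no proof of its own, so your attempt can only be measured against the cited literature. Within that frame, your reduction to the single membership $\lambda_{\bar k}\in W\lambda_i+W\lambda_j$ is correct, and your sufficiency argument --- quoting the PRV conjecture as proved by Kumar and Mathieu to exhibit $V_{\bar k}$ as the component of highest weight $\overline{\lambda_i+w\lambda_j}$ --- is exactly the intended use of those references. Your $\mathfrak{sl}_2$ example showing that necessity fails for non-fundamental dominant weights is also correct and well chosen.

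The genuine gap is the necessity direction, which you explicitly leave unproved. What you offer there is only a plan: to locate, in the support of a highest-weight vector $T=\sum_a x_a\otimes y_a$ of the $V_{\bar k}$-component, a single summand in which both tensor factors have extremal weight. No argument is given that such a summand exists, and it is not clear that this is the right mechanism: the combinatorial conclusion $\lambda_{\bar k}\in W\lambda_i+W\lambda_j$ asserts the existence of \emph{some} extremal decomposition of $\lambda_{\bar k}$, which need not be witnessed in the support of any particular highest-weight vector --- when $u\lambda_i+v\lambda_j=\lambda_{\bar k}$ the weight space $\left(V_i\right)_{u\lambda_i}\otimes \left(V_j\right)_{v\lambda_j}$ is one-dimensional, but nothing in your sketch forces $T$ to have a nonzero component there, so even proving the extremal-support property would require an idea you have not supplied. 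Since the sufficiency half is an outright citation of a deep theorem, the necessity half is the only portion of the statement your write-up was in a position to actually prove, and it is precisely the portion that is missing; as it stands the proposal establishes only one implication.
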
    
Now clearly (\ref{Dorey}) implies (\ref{PRV}), but not vice versa. So in light of this result, which connects the Weyl-orbits of weights to invariants of $\g$-representations, it is very natural to suppose that the fusing rule (\ref{Dorey}) plays a similar role for representations of some larger (and hence more restrictive) algebraic structure. In \cite{M2}, MacKay conjectured that this is indeed the case and that the relevant algebra is the Yangian $\Yg$.     
    
Recall that  the universal envelope $U(\widehat\g)$ of the untwisted affine algebra $\widehat\g$ has a canonical Drinfel'd-Jimbo deformation $\uqgh$, called a \emph{quantum affine algebra}, and that the \emph{Yangian} $\Yg$ is the \emph{rational degeneration} of $\uqgh$ \cite{DrinfeldYBE,Drinfeld}. $\Yg$ and $\uqgh$ share essentially the same representation theory \cite{Varagnolo}. There is a notion of the fundamental representations $V_{i,a}$ of $\uqgh$, where $i\in I$, and $a\in \Cx$ is an additional label which we will call the  rapidity; see e.g. \cite{CPbook}.\footnote{We are of course using  the word ``rapidity'' in two, a priori different, senses: for the kinematical label of particles in ATFT and for the spectral parameter of representations of $\uqgh$. The role of $\uqgh$-symmetry in real- and imaginary-coupling affine Toda field theory is indeed rather subtle. See \cite{TW,SK} and references therein.    
}    
The $V_{i,a}$ are finite-dimensional and $V_i\subset V_{i,a}$.    
    
In the classical cases, the following theorem was proved by Chari and Pressley, confirming the conjecture above. (In fact, \cite{CP95} considered all the classical cases $ABCD$, but we quote here only the result for the classical simply-laced cases $AD$.)       
\begin{theorem}[\cite{CP95}] A necessary and sufficient condition for    
\be \Hom_{\uqgh}\left( V_{i,a} \otimes V_{j,b} \otimes V_{k,b}, \mathbb C\right) \neq 0 \, ,\ee      
for some rapidities $a,b,c\in \Cx$, is that    
\be 0 \in  \Gamma \lambda_i + \Gamma \lambda_j + \Gamma \lambda_k.\ee\label{CPthm}    
\end{theorem}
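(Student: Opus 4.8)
The plan is to prove the theorem through the theory of $q$-characters, which has the advantage of being uniform across the simply-laced cases and so avoids the ADE classification. Write $\chi_q$ for the $q$-character homomorphism; it is an injective ring homomorphism from the Grothendieck ring of finite-dimensional $\uqgh$-modules into $\mathbb Z[Y_{l,d}^{\pm1}]$, it is multiplicative on tensor products, and the trivial module is the unique one with $\chi_q=1$. Since $\chi_q$ is additive on composition series, the existence of a nonzero map $V_{i,a}\otimes V_{j,b}\otimes V_{k,c}\to\mathbb C$ immediately forces the monomial $1$ to occur in the product $\chi_q(V_{i,a})\,\chi_q(V_{j,b})\,\chi_q(V_{k,c})$. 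I would therefore prove the sharpened statement that, for some $a,b,c$, the monomial $1$ occurs in this product \emph{if and only if} $0\in\Gamma\lambda_i+\Gamma\lambda_j+\Gamma\lambda_k$, and only at the end return from the combinatorics to the $\Hom$ statement.

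The decisive ingredient is a dictionary between spectral parameters and the Coxeter element. Each weight in a Weyl orbit $W\lambda_i$ is extremal, hence of multiplicity one, so it labels a \emph{unique} monomial $M^{(u)}_i(a)\in\chi_q(V_{i,a})$ of weight $u\lambda_i$ (with $M^{(\id)}_i(a)=Y_{i,a}$). I would establish that, once $\uqgh$-rapidities are normalised so that the step $w^{n}\lambda_i\mapsto w^{n+1}\lambda_i$ along a Coxeter orbit is implemented by a fixed shift $a\mapsto aq^2$ of the spectral parameter, the monomials $M^{(w^n)}_i(a)$ attached to the Coxeter orbit $\Gamma\lambda_i$ form a ``string'' whose $Y$-data is rigidly locked to this clock; the period of the clock is the Coxeter number $h$, reflecting $w^h=1$ together with the duality that sends $V_{l,d}$ to another fundamental module with spectral parameter shifted by a power of $q$ controlled by $h$. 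Concretely, this is nothing but the statement that the action of $w$ on weights is realised, inside $\chi_q$, by translation of the spectral parameter, and it is exactly the classification-free input that renders the geometry of Coxeter orbits --- including the fusing angles, recovered by projecting onto the $\exp(\pm2\pi i/h)$-eigenplane of $w$ --- visible at the level of $q$-characters.

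With this dictionary, the ``if'' direction is direct: given a relation $0=w^{p}\lambda_i+w^{q}\lambda_j+w^{r}\lambda_k$, the weight parts of $M^{(w^p)}_i(a)$, $M^{(w^q)}_j(b)$, $M^{(w^r)}_k(c)$ cancel, and the Coxeter clock then forces a single choice of $a,b,c$ (up to overall scale) for which the $Y$-variables also cancel, so that the product of the three monomials is exactly $1$. The ``only if'' direction, and then the passage back to $\Hom$, are more delicate. For the former I would show that any triple of monomials with product $1$ must, after using the weight grading and the spectral grading simultaneously, arise from extremal monomials whose weights constitute a genuine Coxeter relation; for the latter I would invoke the cyclicity and rigidity of tensor products of fundamental representations at resonant spectral parameters, which upgrades the trivial \emph{composition factor} detected by the monomial $1$ to an actual trivial \emph{quotient}, giving $\Hom\neq0$.

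The main obstacle, as the $D_5$ example in the Introduction already signals, is precisely the ``only if'' rigidity. The PRV theorem guarantees only the weaker condition $0\in W\lambda_i+W\lambda_j+W\lambda_k$, and the monomial $1$ could a priori be assembled either from \emph{interior} (non-extremal) monomials --- such as those carrying the zero weight, which occurs with high multiplicity in $\chi_q(V_{2,a})$ for $D_5$ --- or from extremal monomials whose weights obey a Weyl relation that does \emph{not} descend to the Coxeter orbits. The crux is to prove, uniformly and without case-checking, that the spectral-parameter bookkeeping is rigid enough to forbid all such accidental cancellations, so that an exact product $1$ can occur only along the Coxeter strings. Carrying out this rigidity analysis --- essentially a precise control of how the spectral parameter grades every monomial of $\chi_q(V_{i,a})$, not merely the extremal ones --- is where the real work of the proof lies.
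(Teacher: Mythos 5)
The theorem you are proving is quoted in the paper from \cite{CP95}, where the proof is case-by-case for the classical types; the paper does not reprove it. What the paper proves uniformly (theorem \ref{sec3thm}) is exactly your intermediate ``sharpened statement'': the monomial $1$ occurs in $\chi_q(V_{i,a}\otimes V_{j,b}\otimes V_{k,c})$ for some rapidities if and only if $0\in\Gamma\lambda_i+\Gamma\lambda_j+\Gamma\lambda_k$. Your plan for that part is in the right spirit but is organised around the wrong monomials: the decisive object is not an extremal monomial. The paper shows (lemma \ref{qmonlemma}, via the right-negative/left-positive properties of Frenkel--Mukhin) that a product of three monomials equal to $1$ forces one factor to be a highest weight monomial, one a lowest weight monomial, and the third the quadratic monomial $Y_{\bar\jmath,bq^{h}}Y_{k,c}^{-1}$, whose classical weight $\lambda_{\bar\jmath}-\lambda_k$ is in general not in $W\lambda_i$; it then proves that \emph{every} monomial of $\chi_q(V_{i,a})$, not only the extremal ones, encodes a Coxeter-orbit identity (proposition \ref{1impliesrule}), and the converse --- that the required quadratic monomial actually occurs --- takes a genuinely nontrivial construction (propositions \ref{routeprop} and \ref{inchiq}). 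You correctly flag this as ``where the real work lies,'' but your Coxeter-clock dictionary for extremal monomials alone would not produce it.

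The genuine gap is the final step back from the monomial $1$ to $\Hom\neq 0$. Since tensor products of fundamental $\uqgh$-modules at resonant rapidities are reducible but indecomposable, the occurrence of $1$ in the $q$-character only says that the trivial module is a composition factor; it does not make it a quotient, which is what $\Hom_{\uqgh}(V_{i,a}\otimes V_{j,b}\otimes V_{k,c},\mathbb C)\neq 0$ requires. Your appeal to ``cyclicity and rigidity of tensor products at resonant spectral parameters'' is precisely the missing argument, not a proof of it: the paper explicitly states in section \ref{outlook} that it obtains only the \emph{necessity} of Dorey's rule for $\Hom\neq 0$ and that sufficiency ``may require more knowledge about the structure of fundamental $\uqgh$-modules than their $q$-characters alone provide.'' Moreover the ordering of the tensor factors matters: the expected refinement distinguishes cyclic from acyclic orderings of $(w^{n_1}\lambda_{i_1},w^{n_2}\lambda_{i_2},w^{n_3}\lambda_{i_3})$, with $\Hom(\,\cdot\,,\mathbb C)$ nonzero for one ordering and $\Hom(\mathbb C,\,\cdot\,)$ for the reverse, whereas your proposal is insensitive to ordering. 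As written, then, your argument (modulo the deferred combinatorics) delivers the $q$-character equivalence and hence the necessity direction of the theorem, but not the sufficiency it claims.
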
    
    
\subsection{Motivations and Outline}    
Despite the positive result above, it is fair to say that a satisfactory understanding of the link between the fusing rule and the representation theory of simply-laced quantum affine algebras is still missing. Most apparently, the proof in \cite{CP95} was case-by-case and did not include the exceptional cases $E_6$, $E_7$ and $E_8$. More importantly, part of what makes the rule (\ref{Dorey}) elegant is that it encodes not only the triples $(i,j,k)$ for which fusing can occur, but also the fusing angles, via the projection map mentioned above. This aspect played no role in \cite{CP95}, where the required rapidities were determined without reference to this projection map.     
One would like to understand \emph{why} the rapidities emerge as they do from the geometry of Coxeter orbits of roots and weights.    
    
In the present paper we take a step in this direction, by relating the geometry of Coxeter orbits to the \emph{$q$-characters} of fundamental representations of $\uqgh$. The notion of $q$-characters, due to  Frenkel and Reshetikhin \cite{FR}, 
following \cite{Knight}, is an important development in the representation theory of quantum affine algebras. 
Here they will allow us to give, in particular, a general proof that Dorey's rule is a \emph{necessary} condition for the existence of invariant maps, $\Hom_{\uqgh}\left( V_{i,a} \otimes V_{j,b} \otimes V_{k,c}, \mathbb   C\right) \neq 0$, and singlets, $\Hom_{\uqgh} (\mathbb C,  V_{k,c} \otimes V_{j,b} \otimes V_{i,a}) \neq 0$. 

The structure of this paper is as follows: in section \ref{qcharsec} we recall the definition of $\uqgh$, and the necessary details of the theory of $q$-characters. 

Then in section \ref{fuzsec} we go on to prove our main result (theorem \ref{sec3thm}), which states that Dorey's rule provides a necessary and sufficient condition for the monomial $1$ to occur in the $q$-character of a three-fold tensor product of fundamental representations. We prove this by first showing (lemma \ref{qmonlemma}) that the latter statement can be rephrased as a statement about the occurrence of quadratic monomials in the $q$-character of a single fundamental representation. We then prove that such quadratic monomials are in a certain precise correspondence with solutions to Dorey's rule. 

Indeed, it will emerge that in fact \emph{every} monomial in the $q$-character can very naturally be seen as specifying some identity among the Coxeter orbits of the fundamental weights of $\g$ (proposition \ref{1impliesrule}). The reverse direction however (going from identities to monomials) is more subtle, and one must work harder to show (propositions \ref{routeprop} and \ref{inchiq}) that it always holds for identities of the form (\ref{Dorey}) above.    

We conclude in section \ref{outlook} by commenting on the relationship of our result to the theorem \ref{CPthm} above, and noting some open questions.    
    
We assume, throughout this paper, that $q \in \Cx$ is not a root of unity.

\section{Quantum Affine Algebras and $q$-characters}\label{qcharsec}    
The \emph{quantum affine algebra} $\uqgh$ is an associative algebra over $\mathbb C$     
generated by \be (x^\pm_{i,n})_{i \in I, n \in \mathbb Z}, \quad\quad (k_i^{\pm 1})_{i \in I}, \quad\quad  (h_{i, n})_{i \in I , n \in \mathbb Z_{\!\neq 0}},\label{gens}\ee     
and central elements $c^{\pm 1/2}$. In this paper we study finite dimensional representations of $\uqgh$ when $\g$ is simply laced. As we recall below, for this purpose it actually suffices to work with the \emph{quantum loop algebra} $U_q(L\g) = \uqgh/(c^{\pm 1/2}-1)$.     
Following \cite{Drinfeld}, let us arrange the generators into formal series    
\be x^\pm_i(u) := \sum_{n \in \mathbb Z} x^{\pm}_{i,n} u^{-n} \label{xiu} \ee    
\be \phi_i^\pm (u) = \sum_{n=0}^\infty  \phi_{i,\pm n}^\pm u^{\pm n} :=     
k_i^{\pm 1} \exp \left(\pm (q - q^{-1}) \sum_{m=1}^\infty h_{i,\pm m} u^{\pm m} \right ) \, , \label{phiu} \ee    
and set    
\be \delta(u) := \sum_{n \in \mathbb Z} u^n .\ee    
The defining relations of $U_q(L\g)$ are then    
\be    
\left [\phi^\pm_i(u) , \phi^\pm_j (v) \right ] = \left [ \phi^\pm_i (u) , \phi^\mp_j(v) \right ] = 0 \ee    
\be    
\phi^\pm_i(u)\,  x^+_j(v) = q^{a_{ij}} \, \frac{1- q^{-a_{ij}} uv}{1- q^{a_{ij}}uv} \, x^+_j(v) \, \phi^\pm_i(u) \label{phix+}\ee    
\be    
\phi^\pm_i(u) \, x^-_j(v) = q^{-a_{ij}} \, \frac{1- q^{a_{ij}} uv}{1- q^{-a_{ij}}uv} \, x^-_j(v) \, \phi^\pm_i(u) \label{phix-}    
\ee    
\be \left [x^+_i(u), x^-_j(v) \right ] = \frac{\delta_{ij}}{q-q^{-1}} \left (\delta(v/u) \phi^+_i(1/v) - \delta(u/v) \phi^-_i(1/u) \right ) \label{x+x-}\ee    
\be \left ( u-q^{\pm a_{ij}}v \right )x^\pm_i(u) \, x^\pm_j(v) =     
\left (q^{\pm a_{ij}}u-v \right)\, x^\pm_j(v) \, x^\pm_i(u) \label{x+x+1}\ee    
\bea && x^\pm_i(u) x^\pm_i(v) x^\pm_j(w) - (q+q^{-1}) x^\pm_i(u) x^\pm_j(v) x^\pm_i(w) \nonumber \\    
 && \qquad + \,\,  x^\pm_j(v) x^\pm_i(u) x^\pm_i(w) \quad + \quad \left( u \leftrightarrow v \right)= 0 \qquad \qquad \mbox{if $a_{ij}=-1$,}\eea    
where $a_{ij}$ is the Cartan matrix of $\g$.    
As we shall see, this presentation, which is a slightly modified version of Drinfel'd's \emph{current} presentation \cite{Drinfeld}, is convenient because the $\phi_i^\pm(u)$ and $x^\pm_i(u)$ behave analogously to the usual Cartan generators and raising/lowering operators in the representation theory of finite-dimensional simple Lie algebras. From its origin as a standard Drinfel'd-Jimbo deformation of $U(\widehat \g)$, $\uqgh$ admits a \emph{standard} Hopf algebra structure $\uqgh^{\mbox{\tiny std}}$-- see e.g. \cite{CPbook}. No closed form is known for the standard coproduct in the current presentation above. As we note in the conclusion, there does exist another (twist-equivalent \cite{EKP}) Hopf algebra structure for $\uqgh$ better suited to the current presentation; for details see \cite{HernandezFusionI,HernandezFusionII,Grosse}.    
    
\subsection{Representations and Characters}    
    
A representation $V$ of $\uqgh$ is \emph{of type $1$} if $c^{\pm 1/2}$ acts as the identity on $V$ and $V$ is the direct sum of its $U_q(\g)$-weight spaces,     
\be V = \oplus_\lambda V_\lambda\quad\text{ where}\quad     
 V_\lambda = \{ v \in V: k_i \on v = q^{\langle \alpha_i, \lambda \rangle} v\}\ee
and $\lambda$ in the weight lattice of $\g$.     
We recall (see e.g. \cite{CPbook} chapter 12.2B) that any finite-dimensional irreducible representation of $\uqgh$ can be obtained by twisting, by an automorphism of $\uqgh$, a finite-dimensional type 1 representation. Thus it suffices for our purposes to consider type 1 representations, and to regard them as representations of $U_q(L\g)$.    
    
Any type $1$ representation $V$ of $\uqgh$ also furnishes a representation of $\uqg$ (the latter being  the subalgebra generated by $(x^\pm_{i,0})_{i\in I}, (k^\pm_i)_{i\in I}$). Recall that the \emph{character} $\chi(V)$ of $V$ regarded as a $\uqg$-module is defined as    
\be \chi(V) = \sum_\lambda \dim \left(V_\lambda\right)  e^\lambda.\ee    
If $\rep\uqg$ is the category whose objects are finite-dimensional representations of $\uqg$ and whose morphisms are homomorphisms of $\uqg$-modules, then the \emph{Grothendieck ring} $\groth\uqg$ is the ring generated by the isomorphism classes of objects in $\rep\uqg$ subject to the relations $[X][Y]= [X\otimes Y]$ and, for each exact sequence $0\to U \to W\to V \to 0$  of $\uqg$-modules,  $[W] = [U] + [V]$.    
The character map $\chi$ is a homomorphism of rings    
\be \chi : \groth\uqg \longrightarrow     
                                  \mathbb Z\left[y_i^{\pm 1} \right]_{i \in I}\ee    
to the ring of polynomials in variables $y^{\pm 1} _i= e^{\pm \lambda_i}$.

Let us pause to recall that $\rep\uqg$, like $\rep{U(\g)}$, is a \emph{semisimple} category: exact sequences $0\to U \to W\to V \to 0$ exist precisely when $W= U \oplus V$ as $\uqg$-modules; and thus the defining relations of $\groth\uqg$ are in fact just $[U][V] = [U\otimes V]$ and $[U] + [V] = [U\oplus V]$. 
In contrast, representations of $\uqgh$ can be reducible but not fully-reducible. That is, it can happen that there is a short exact sequence $0 \to U \to W \to V \to 0$ of $\uqgh$-modules, so that $U$ is a submodule of $W$, but that $W$ is not the direct sum $U\oplus V$ as a $\uqgh$-module. One says that $W$ is \emph{indecomposable}.    
    
Now for any type 1 representation $V$ of $\uqgh$, the decomposition above into $\uqg$-weight spaces may be further refined by decomposing $V$ into Jordan subspaces of the mutually commuting $\phi_{i,\pm r}^\pm$ defined in (\ref{phiu}), \cite{FR}:    
\be V = \oplus_{\pmb\gamma} V_{\pmb\gamma}, \quad \pmb\gamma = (\gamma_{i,\pm r}^\pm)_{i\in I, r\in \mathbb N}, \quad \gamma_{i,\pm r}^\pm \in \mathbb C\ee    
where 
\be V_{\pmb\gamma} = \{ v \in V : \exists N \in \mathbb N, \,\, \forall i \in I, \quad \left( \phi_{i}^\pm(u) - \gamma_{i}^\pm(u)\right)^N \on v = 0 \} \, . \label{lweightspacesdef}\ee    
If $\dim (V_{\pmb\gamma}) >0$, we shall refer to the corresponding formal series    
\be\forall i \in I\, , \qquad  \gamma_i^\pm(u) := \sum_{r \in \mathbb N}  \gamma_{i,\pm r}^\pm u^{\pm r} \ee    
as an \emph{l-weight} of $V$. It is known \cite{FR} that for every finite-dimensional type 1 representation of $\uqgh$, these l-weights are of the form    
\be \gamma_i^\pm(u) = q^{\deg Q_i - \deg R_i} \, \frac{Q_i(uq^{-1}) R_i(uq)}{Q_i(uq) R_i(uq^{-1})} \, ,\ee    
where the right hand side is to be treated as a formal series in positive (negative) integer powers of $u$ for $\gamma^+_i(u)$ (respectively $\gamma^-_i(u)$), and $Q_i$ and $R_i$ are polynomials with constant term $1$. These latter may be written as    
\be Q_i(u) = \prod_{a\in \Cx} \left( 1- ua\right)^{q_{i,a}}, \quad\quad     
    R_i(u) = \prod_{a\in \Cx} \left( 1- ua\right)^{r_{i,a}}, \ee    
and this allows one to assign to $\pmb\gamma$ a monomial    
\be m_{\pmb\gamma} = \prod_{i \in I, a\in \Cx} Y_{i,a}^{q_{i,a}-r_{i,a}}\ee    
in variables $(Y_{i,a})_{i\in I; a\in \Cx}$. The $q$-character map $\chi_q$ \cite{FR} is the injective homomorphism of rings    
\be \chi_q : \groth\uqgh \longrightarrow \mathbb Z\left[ Y_{i,a}^{\pm 1} \right]_{i\in I, a \in \Cx} \label{chiqintro}\ee    
defined by\footnote{Note that the original definition of $\chi_q$ \cite{FR} was in terms of the universal $\mathcal R$-matrix of $\uqgh$, which makes its close relationship to the transfer matrix of physics more evident. But the above definition, c.f. e.g. \cite{CH}, is more directly suited for our purposes.}    
\be \chi_q (V) = \sum_{\pmb\gamma} \dim\left(V_{\pmb \gamma}\right) m_{\pmb\gamma}.\ee    
The $Y^{\pm 1}_{i,a}$ are to be thought of as the quantum-affine analogues of the usual variables $y^{\pm 1}_j = e^{\pm \lambda_j}$ appearing in character polynomials. In particular, one associates $Y_{i,a}^{\pm 1}$ with the classical weight $\pm \lambda_i$. An algorithm for computing $q$-characters of $\uqgh$-modules directly from the root-system data of $\g$ was proposed in \cite{FR,FM}. It has been proven to work for all fundamental representations \cite{FM}, which is all that we shall require in the present paper, although it is known not to work in general \cite{HernandezLeclerc,Nakai}. In \cite{Nakajima3}, Nakajima deduced an algorithm for computing the $q$-character of any irreducible representation, and formulas for the $q$-characters of fundamental representations were given in \cite{NakajimaAD,NakajimaE}; see also \cite{CM}.

We now turn to summarizing the properties of $q$-characters that we shall need.    
A monomial in $\mathbb Z\left[ Y^{\pm 1}_{i,a}\right]_{i\in I;\, a\in \Cx}$ is said to be \emph{$i$-dominant} if and only if it contains no $Y_{i,a}^{-1}$'s. It is said to be \emph{dominant} if and only if it is $i$-dominant for all $i\in I$. Antidominant monomials are similarly defined to be those not containing $Y_{i,a}$'s.       
     
\subsection{$\uqslt$ characters}\label{sl2chars}    
We first summarize the situation for $\uqslt$ characters. In this case the Dynkin diagram has one node, $I= \{1\}$, and we write $Y_{1,a}^{\pm 1} = Y_a^{\pm 1}$. The fundamental representations $V_a$ of $\uqslt$ have dimension two and are labelled by the rapidity $a\in\Cx$. Their $q$-characters are    
\be \chi_q(V_a) = Y_a \left( 1+A_{aq}^{-1}\right)     
            = Y_a + Y_{aq^2}^{-1},\ee     
where one defines    
\be A_{a} = Y_{aq} Y_{aq^{-1}}.\ee    
The tensor product $V_b\otimes V_c$ of two fundamental representations is irreducible whenever $b/c \notin \{ q^{- 2},q^{+2}\}$. When $b=aq$ and $c= aq^{-1}$ for some $a\in \Cx$, there is an exact sequence of $\uqslt$-modules (\cite{CPsl2}, and with their choice of coproduct)    
\be 0 \rightarrow W^{(2)}_a \rightarrow V_{aq}\otimes V_{aq^{-1}} \rightarrow \mathbb C \rightarrow 0 \ee    
where $W_a^{(2)}$ is a 3-dimensional irreducible submodule and $\mathbb C \cong  \left(V_{aq}\otimes V_{aq^{-1}}\right) \big/ W^{(2)}_a$ is the 1-dimensional module. If instead $b=aq^{-1}$ and $c=aq$, one has the same exact sequence but with arrows reversed:    
\be 0 \rightarrow \mathbb C \rightarrow V_{aq^{-1}}\otimes V_{aq} \rightarrow W^{(2)}_a \rightarrow 0. \ee    
In either case, there is more than one dominant monomial in the $q$-character:    
\bea \chi_q(V_{aq^{-1}}\otimes V_{aq}) =  \chi_q(V_{aq}\otimes V_{aq^{-1}}) &=& \chi_q(V_{aq^{-1}}) \chi_q(V_{aq}) \label{mt1}\\     
   &=& \left(Y_{aq^{-1}}+ Y_{aq}^{-1}\right) \left( Y_{aq} + Y_{aq^3}^{-1}\right)\nn\\    
    &=& 1 +  \left( Y_{aq^{-1}} Y_{aq} + Y_{aq^{-1}} Y_{aq^3}^{-1} + Y_{aq}^{-1} Y_{aq^3}^{-1} \right) \nn.\eea    
In the final line the quantity in brackets is $\chi_q(W^{(2)}_a)$. More generally, for each $r\in \mathbb Z_{\geq 1}$ and $a\in \Cx$ there is an irreducible submodule    
\be  W^{(r)}_a\subset V_{aq^{r-1}} \otimes V_{aq^{r-3}} \otimes \dots \otimes V_{aq^{-r+1}} \ee    
called the $r$-th Kirillov-Reshetikhin module of $\uqslt$. It has dimension $r+1$ and $q$-character\footnote{$W_a^{(r)}$ is the pull-back of the usual spin $r/2$ representation of $U_q(\mathfrak{sl}_2)$ by the \emph{evaluation homomorphism} $\mathrm{ev}_a:\uqslt \to U_q(\mathfrak{sl}_2)$. See e.g \cite{CPsl2}.}    
\bea \chi_q(W^{(r)}_a) &=& \left(Y_{aq^{-r+1}} Y_{aq^{-r+3}} \dots Y_{aq^{r-1}}\right)      
                      \left(1+ \sum_{t=0}^{r-1}     
             A^{-1}_{aq^r} A^{-1}_{aq^{r-2}} \dots A^{-1}_{aq^{r-2t}}\right) \label{chiW}\\    
                  &=&\phantom{{}+{} }     
       Y_{aq^{-r+1}} Y_{aq^{-r+3}} \dots Y_{aq^{r-3}}          Y_{aq^{r-1}} \nn\\    
& &{}+ Y_{aq^{-r+1}} Y_{aq^{-r+3}} \dots Y_{aq^{r-3}} \phantom{Y_{aq^{1+1}}} Y_{aq^{r+1}}^{-1} \nn\\    
& & \qquad\qquad\qquad\qquad\dots\nn\\    
&& {}+ Y_{aq^{-r+1}} \phantom{Y_{aq^{1+1}}} Y_{aq^{-r+5}}^{-1} \dots Y_{aq^{r-1}}^{-1} Y_{aq^{r+1}}^{-1} \nn\\    
&& {} + \phantom{Y_{aq^{1+1}}} Y_{aq^{-r+3}}^{-1} Y_{aq^{-r+5}}^{-1} \dots Y_{aq^{r-1}}^{-1} Y_{aq^{r+1}}^{-1}.\nn\eea      
$W_a^{(r)}$ is completely characterised by the set of rapidities $S_{r}(a) = \left \{a q^{-r+1}, aq^{-r+3}, \dots, aq^{r-1} \right \}$ appearing in its dominant monomial, which we shall refer to as a \emph{segment} of length $r$ centred on $a$. Two such segments are said to be in \emph{special position} if their union is itself a segment and neither of them contains the other. We say $aq^{-r+1}$ is the \emph{leftmost} element of $S_{r}(a)$, $aq^{r-1}$ the \emph{rightmost}. More generally we say that $aq^k$ is \emph{to the right} (\emph{left}) of $aq^l$ iff $k>l$ (resp. $k<l$). 
    
Presented with any dominant monomial $m_+=\prod_s Y_{a_s}$ one can reconstruct the unique irreducible $\uqslt$-module $V(m_+)$ such that $m_+$ is the \emph{highest weight monomial} in $\chi_q(V(m_+))$.     
First split the factors $Y_{a_s}$ into a product of segments no two of which are in special position: say    
\be m_+ = \prod_{t\in T} \left(Y_{a_tq^{-r_t+1}} Y_{a_tq^{-r_t+3}} \dots Y_{a_tq^{r_t-1}}\right),\ee    
for some index set $T$; then     
\be V(m_+) \cong \bigotimes_{t\in T} W^{(r_t)}_{a_t}\label{Vm+},\ee 
which can be shown to be irreducible and, up to isomorphism, independent of the ordering of the tensor factors.   
    
Finally, there is an important caveat: reducible modules certainly have more than one dominant monomial, as in e.g.  (\ref{mt1}), but irreducible modules can \emph{also} have multiple dominant monomials. This happens precisely when they fail to be \emph{regular}, in the terminology of \cite{FR}. Consider $m_+ = Y_{aq^{-1}}^2 Y_{aq}$ to see the problem. Note that the resulting $q$-character contains the (dominant) monomial $Y_{aq^{-1}}$ but \emph{not} the monomial $Y_{aq}^{-1}$. Thus, in computing $q$-characters, one cannot treat all dominant monomials as though they were highest monomials. For that reason we shall need the following
\begin{proposition}
\label{yan}
Let $V$ be a simple finite dimensional $\uqslt$-module of type 1. Suppose that for some $a\in\Cx$ and $n>0$, $\chi_q(V)$ includes a dominant  monomial $m$ such that $Y_{a}^n$ is a factor\footnote{For every $b\in \Cx, k\in \mathbb Z_{\neq 0}$, we say that $Y_{b}^k$ is a factor of $m=\prod_{c\in\Cx} Y_c^{u_c}$ iff either $u_b \geq k >0$ or $u_b \leq k <0$.}
 of $m$ and $Y_{aq^2}$ is not. Then, either
\begin{enumerate}[i)]
\item $\chi_q(V)$ includes the monomials $m A_{aq}^{-p}$, $1\leq p\leq n$; or
\item there exists a $k>0$ such that $\chi_q(V)$ includes the monomial $m A_{aq^k}$.
\end{enumerate}
\end{proposition}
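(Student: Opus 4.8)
The plan is to reduce everything to a single $q^2$-coset of spectral parameters. Since $\chi_q$ factorises over cosets $C=aq^{2\Z}$ and every object in the statement — the variables $Y_a,Y_{aq^2}$ and the element $A_{aq}=Y_aY_{aq^2}$ — lies in the coset of $a$, a monomial of $\chi_q(V)$ is dominant, and is multiplied by $A_{aq}^{\pm1}$, componentwise in the cosets; so I may discard all other cosets and will in fact produce alternative (ii) with $k=1$. Introducing integer position coordinates $j\leftrightarrow aq^{2j}$ (so $Y_a$ sits at $0$ and $Y_{aq^2}$ at $1$), I would use simplicity of $V$ to write $V\cong\bigotimes_{t}W^{(r_t)}_{a_t}$, hence $\chi_q(V)=\prod_t\chi_q(W^{(r_t)}_{a_t})$, the relevant factors being Kirillov--Reshetikhin modules supported on intervals $I_t=[l_t,R_t]$. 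Reading off (\ref{chiW}), the monomial of $W^{(r_t)}_{a_t}$ labelled by $0\le s_t\le r_t$ is the product of $Y$'s over a positive block $[l_t,e_t]$, with $e_t:=R_t-s_t$, times the inverse $Y$'s over a negative block $[e_t+2,R_t+1]$. Fixing one factorisation $m=\prod_t m^{(t)}_{s_t}$, the bookkeeping I need is that lowering a factor ($s_t\mapsto s_t+1$) multiplies $m$ by $A^{-1}$ at positions $\{e_t,e_t+1\}$, while raising it ($s_t\mapsto s_t-1$) multiplies by $A$ at positions $\{e_t+1,e_t+2\}$.

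With this dictionary the two alternatives become operations on the factors. I would observe that $mA_{aq}^{-1}=mY_a^{-1}Y_{aq^2}^{-1}$ is realised by lowering any factor with $e_t=0$, so that $mA_{aq}^{-p}$ ($1\le p\le n$) is realised by lowering $p$ distinct such factors; writing $N_0$ for their number, alternative (i) holds as soon as $N_0\ge n$. Conversely, $mA_{aq}=mY_aY_{aq^2}$ is realised by raising any factor whose negative block begins exactly at position $1$, that is, with $e_t=-1$ (such a factor is automatically raisable, since $R_t\ge0$ forces $s_t=R_t+1\ge1$), and this gives alternative (ii) with $k=1$.

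The crux is a counting argument showing that whenever $N_0<n$ such an $e_t=-1$ factor must exist. The hypotheses say $m$ is dominant with $\mathrm{pow}(Y_a)\ge n$, and since $Y_{aq^2}$ is not a factor of the dominant $m$, $\mathrm{pow}(Y_{aq^2})=0$. Sorting the factors by their contribution at positions $0$ and $1$, I would establish the identity $\mathrm{pow}(Y_a)-\mathrm{pow}(Y_{aq^2})=N_0-L_1-M_0+M_1$, where $M_0$ (resp.\ $M_1$) counts factors whose negative block covers position $0$ (resp.\ $1$), and $L_1$ counts factors whose positive block begins at position $1$. The left side is $\ge n$, so $N_0<n$ forces $M_1-M_0\ge n-N_0+L_1>0$. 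Finally, a factor whose negative block $[e_t+2,R_t+1]$ covers position $1$ has $e_t\le-1$ and $R_t\ge0$; if moreover $e_t\le-2$ its block covers position $0$ too, so a negative-at-$1$ factor that is not negative-at-$0$ must have $e_t=-1$. As $M_1>M_0$ guarantees such a factor, raising it completes alternative (ii).

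I anticipate the main obstacle to be isolating the correct invariant, namely the difference $\mathrm{pow}(Y_a)-\mathrm{pow}(Y_{aq^2})$ and its expression $N_0-L_1-M_0+M_1$. This is precisely what excludes the a priori troublesome configuration in which every factor with a negative block over position $1$ also covers position $0$ (so that raising it would only produce $A_{aq^k}$ with $k<0$, useless for (ii)). The remaining points — that the constructed products are genuine monomials of $\chi_q(V)$, which is immediate from $\chi_q(V)=\prod_t\chi_q(W^{(r_t)}_{a_t})$, and that the type-$0$ factors used in (i) are actually lowerable (they have $l_t\le0$, hence $s_t<r_t$) — I expect to be routine.
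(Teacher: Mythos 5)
Your argument is correct and rests on the same foundation as the paper's proof: write the simple module as a tensor product of Kirillov--Reshetikhin factors (Chari--Pressley), factor the dominant monomial $m$ accordingly, and count multiplicities at $a$ and $aq^2$ across the factors to force either enough lowerable factors or a raisable one. Where the two differ is in how the counting is organized. The paper sorts the factors whose segment contains $a$ into three classes (dominant containing both $Y_a$ and $Y_{aq^2}$, dominant with rightmost factor $Y_a$, non-dominant), first looks for a non-dominant factor whose leftmost inverse lies strictly to the right of $a$ --- which yields case ii) with a possibly large odd exponent $k$ --- and only then counts powers of $Y_{aq^2}$ and $Y_a$ to establish case i). You instead isolate the single linear identity $\mathrm{pow}(Y_a)-\mathrm{pow}(Y_{aq^2})=N_0-L_1-M_0+M_1$ and split on whether $N_0\geq n$. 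This buys a cleaner dichotomy, no appeal to the pairwise general position of the segments beyond what is needed for the tensor decomposition itself, and the sharper conclusion that case ii) can always be realised with $k=1$. One small repair: $N_0$ must be defined as the number of factors whose positive block ends at position $0$ \emph{and is non-empty} (i.e.\ $e_t=0$ and $l_t\leq 0$); a fully lowered factor with $l_t=1$, $e_t=0$ contributes nothing at positions $0$ and $1$ and is not lowerable, so counting it would perturb both the identity and the lowering step. With that convention everything you assert goes through.
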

\begin{proof}
Let $T$ be an index set such that $V$ can be written as in (\ref{Vm+}) above, with the $S_{r_t}(a_t)$ in pairwise general position and $r_t >0$ for all $t \in T$. By hypothesis there exist $(m_t)_{t \in T}$ such that $m_t$ is a monomial of $\chi_q(W^{(r_t)}_{a_t})$ for each $t\in T$ and
\be \label{yanprod} m = \prod_{t \in T} m_t \, .\ee 
Let $T'=\{t\in T: a\in S_{r_t}(a_t)\}$. 
Note that $Y_{aq^2}^{-1}$ is a factor of $m_t$ only if  $m_t\in T'$.  
$T'$ is the disjoint union of the following three subsets: 
\bea\nn T'_1 &=& \{t\in T': \text{$m_t$ is dominant and has both $Y_a$ and $Y_{aq^2}$ as factors}\},\\ 
\nn T'_2 &=& \{t\in T': \text{$m_t$ is dominant and has rightmost factor $Y_{a}$}\},\\
\nn T'_3 &=& \{t\in T': \text{$m_t$ is not dominant}\}.\eea
If there is a $t\in T'_3$ such that $Y_a^{-1}$ is not a factor of $m_t$ then the leftmost factor $Y^{-1}$ in $m_t$ is $Y_{aq^{2\ell}}^{-1}$ for some $\ell >0$. In that case $m_tA_{aq^{2\ell-1}}$ appears in $\chi_q(W^{(r_t)}_{a_t})$, c.f. (\ref{chiW}), and ii) holds. 

It remains to consider the case that $Y_a^{-1}$ is a factor of every $m_t$, $t\in T'_3$. By definition of $T'$, $Y_{aq^2}^{-1}$ is then also a factor of every $m_t$, $t\in T'_3$. 
Suppose for a contradiction that there existed a $t\in T$ such that $m_t$ is dominant with leftmost factor $Y_{aq^2}$. Since by assumption the total power of $Y_{aq^2}$ in $m$ is zero, that would require $|T'_1|<|T'_3|$; but also, by definition of general position, that $|T'_2|=0$ and hence $|T'_1|-|T'_3|\geq n>0$, a contradiction. Therefore there is no such $t\in T$ and so in fact, by counting powers of $Y_{aq^2}$ in $m$, $|T'_1|=|T'_3|$. Consequently the power of $Y_a$ in $\prod_{t\in T'_1\sqcup T'_3} m_t$ is zero. It follows that $|T'_2|\geq n$ and hence that i) holds. 
\finproof
\end{proof}


\subsection{$\uqgh$ characters}    
\label{sec:uqgchar}    
Returning to the general case, we let $V_{i,a}$, $i\in I, a\in \Cx$ denote the $i$-th fundamental representation of $U_q(\widehat\g)$ at rapidity $a$. (See e.g. \cite{CPbook}.) It may be shown \cite{FR, FM} that $\chi_q(V_{i,a})$ contains the highest weight monomial $Y_{i,a}$ and that, if we define    
\be A_{i,a} = Y_{i,aq^{-1}} Y_{i,aq} \prod_{\nbr j i } Y_{j,a}^{-1},\ee    
where the product $\prod_{\nbr j i}$ is over the nodes $j$ of the Dynkin diagram that neighbour $i$,\footnote{That is $\prod_{\nbr j i}=\prod_{j:I_{ji}=1}$ where     
\be I_{ij} = 2 \delta_{ij} - a_{ij} = \begin{cases} 1 \quad \text{if    
$i,j$ are neighbouring nodes on the Dynkin diagram}\\  0 \quad    
\text{otherwise}\end{cases}\ee    
is the incidence matrix.}    
then every monomial in $\chi_q(V_{i,a})$ is of the form    
\be Y_{i,a} A_{j_1,a_1}^{-1} \dots A_{j_n,a_n}^{-1} \label{onlyainvs}\ee    
for some finite collection of $n\geq 0$ pairs $(j_k,a_k)\in I \times \Cx$. For each $j\in I$, let $\uqsl j \subset \uqgh$ be the subalgebra generated by $x_j^\pm(u),\phi_j^\pm(u)$. Let $\chi_q^{(j)}$ be the $q$-character map of $\uqsl j$ and     
\be \beta_j : \mathbb Z \left[ Y^{\pm 1}_{i,a}\right]_{i\in I; a \in \Cx} \to \mathbb Z \left[ Y^{\pm 1}_{j,a}\right]_{a \in \Cx} \ee     
the ring homomorphism which sets to one all the $Y^{\pm 1}_{k,a}$ with $k \neq j$. Then every $\uqgh$-module $V$ is also a $\uqsl j$-module, and $\chi_q^{(j)}(V) = \beta_j \circ \chi_q(V)$. In fact, more is true: there exists \cite{FM} an \emph{injective} ring homomorphism    
\be\tau_j : Z \left[ Y^{\pm 1}_{i,a}\right]_{i\in I; a \in \Cx} \to \mathbb Z \left[ Y^{\pm 1}_{j,a}\right]_{a \in \Cx} \otimes \mathbb Z \left [ Z_{k, b}^{\pm 1}\right ]_{k \neq j; b \in \Cx}\ee    
refining $\beta_j$, where $Z_{i,a}^{\pm 1}$ are certain new formal variables, and    
\be \tau_j (\chi_q(V_{i,a})) = \sum_p \chi_q^{(j)}(V_p) \otimes N_p \, , \label{taujchi}\ee    
where the $V_p$ are $\uqsl j$-modules and the $N_p$ are monomials in $(Z_{k,b}^{\pm 1})_{k\neq j, b \in \Cx}$. Furthermore, in the diagram    
\be\label{tauj}\begin{tikzpicture}    
\matrix (m) [matrix of math nodes, row sep=3em,    
column sep=4em, text height=2ex, text depth=1ex]    
{     
\mathbb Z \left[ Y^{\pm 1}_{i,a}\right]_{i\in I; a \in \Cx}    
 & \mathbb Z \left[ Y^{\pm 1}_{j,a}\right]_{a \in \Cx} \otimes \mathbb Z \left [ Z_{k, b}^{\pm 1}\right ]_{k \neq j; b \in \Cx} \\    
\mathbb Z \left[ Y^{\pm 1}_{i,a}\right]_{i\in I; a \in \Cx}      
 & \mathbb Z \left[ Y^{\pm 1}_{j,a}\right]_{a \in \Cx} \otimes \mathbb Z \left [ Z_{k, b}^{\pm 1}\right ]_{k \neq j; b \in \Cx}\\    
};    
\path[->,font=\scriptsize]    
(m-1-1) edge node [above] {$\tau_j$} (m-1-2)    
(m-2-1) edge node [above] {$\tau_j$} (m-2-2)    
(m-1-1) edge (m-2-1)    
(m-1-2) edge (m-2-2);    
\end{tikzpicture}\ee    
let the right vertical arrow be multiplication by $\beta_j(A_{j,c}^{-1}) \otimes 1$; then the diagram commutes if and only if the left vertical arrow is multiplication by $A^{-1}_{j,c}$.

Consequently, if one has found a term $m_+ \otimes N_p$ in the r.h.s of (\ref{taujchi}), and one knows that $m_+$ is the highest weight monomial of $\chi_q^{(j)}(V_p)$, then one can construct all the remaining monomials in $\chi_q^{(j)}(V_p) \otimes N_p$ (as discussed in the previous subsection) and hence their (unique) preimages in $\chi_q(V_{i,a})$.    
Frenkel and Mukhin gave an algorithm for computing the $q$-character with a given highest monomial \cite{FM}, by repeatedly completing $\uqslt$-characters in this way. They proved that it works for any $q$-character with a unique dominant monomial (and so in particular for the $q$-characters of fundamental representations). 

The specific instance of this sort of reasoning which we will require, in proposition \ref{quadrmonom}, is the following, which follows immediately from the existence and property (\ref{tauj}) of $\tau_j$ together with proposition \ref{yan} above.
\begin{proposition}\label{yang}
Let $j\in I$, $a\in\Cx$ and $n>0$. Suppose $m$ is a $j$-dominant monomial in $\chi_q(V_{i,a})$ such that $Y_{a}^n$ is a factor of $\beta_j(m)$ and $Y_{aq^2}$ is not. Then, either
\begin{enumerate}[i)]
\item $\chi_q(V_{i,a})$ includes the monomials $m A_{j,aq}^{-p}$, $1\leq p\leq n$; or
\item there exists a $k>0$ such that $\chi_q(V_{i,a})$ includes the monomial $m A_{j,aq^k}$.
\end{enumerate}
\end{proposition}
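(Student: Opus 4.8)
The plan is to reduce the claim to the $\uqslt$ statement of proposition \ref{yan}, by pushing everything through the map $\tau_j$ and then lifting the monomials it produces back to $\chi_q(V_{i,a})$ via the commuting square (\ref{tauj}).

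First I would apply $\tau_j$ to $\chi_q(V_{i,a})$ and use the decomposition (\ref{taujchi}). Since $\tau_j$ refines $\beta_j$, the image of the given monomial is $\tau_j(m) = \beta_j(m)\otimes N$ for a single $Z$-monomial $N$, so there is a unique term $\chi_q^{(j)}(V_s)\otimes N_s$ in (\ref{taujchi}) with $N_s = N$ and with $\beta_j(m)$ occurring in $\chi_q^{(j)}(V_s)$. Because $m$ is $j$-dominant, $\beta_j(m)$ contains no $Y_a^{-1}$ and is therefore a dominant monomial of the $\uqsl j$-module $V_s$; by hypothesis $Y_a^n$ is one of its factors while $Y_{aq^2}$ is not. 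Regarding $V_s$ as a $\uqslt$-module and decomposing it into simple summands, $\beta_j(m)$ lies in the $q$-character of a simple summand $V_s'$, to which proposition \ref{yan} applies.

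Next I would invoke proposition \ref{yan} for $V_s'$, $\beta_j(m)$ and $n$. Noting that $\beta_j(A_{j,b}) = A_b$ in the $\uqslt$ variables---the neighbouring factors $Y_{l,b}^{-1}$ with $l\neq j$ being sent to $1$ by $\beta_j$---its two alternatives become: either $\chi_q^{(j)}(V_s')$ contains $\beta_j(m)\,A_{aq}^{-p} = \beta_j\bigl(m\,A_{j,aq}^{-p}\bigr)$ for all $1\le p\le n$, or it contains $\beta_j(m)\,A_{aq^k} = \beta_j\bigl(m\,A_{j,aq^k}\bigr)$ for some $k>0$. Tensoring with the unchanged $N_s$, these are monomials of the summand $\chi_q^{(j)}(V_s)\otimes N_s$, and hence of $\tau_j(\chi_q(V_{i,a}))$.

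Finally I would lift these back. The square (\ref{tauj}) tells us that multiplication by $A_{j,c}^{-1}$ upstairs corresponds under $\tau_j$ to multiplication by $\beta_j(A_{j,c}^{-1})\otimes 1$ downstairs, which fixes the $Z$-monomial; thus the monomials above are exactly $\tau_j\bigl(m\,A_{j,aq}^{-p}\bigr)$, resp.\ $\tau_j\bigl(m\,A_{j,aq^k}\bigr)$. Since $\tau_j$ is injective, $m\,A_{j,aq}^{-p}$, resp.\ $m\,A_{j,aq^k}$, must itself appear in $\chi_q(V_{i,a})$, which is exactly conclusion i), resp.\ ii). The one step that really needs care---and the only obstacle beyond bookkeeping---is the application of proposition \ref{yan}, which requires a \emph{simple} $\uqslt$-module: one must check that restricting attention to the simple summand $V_s'$ is legitimate, and that the subsequent $A$-multiplications (which preserve $N_s$) keep all the relevant monomials inside this single summand, so that injectivity of $\tau_j$ yields unique preimages in $\chi_q(V_{i,a})$.
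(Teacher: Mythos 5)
Your proposal is correct and follows essentially the same route as the paper: the paper gives no separate argument, stating only that the proposition ``follows immediately from the existence and property (\ref{tauj}) of $\tau_j$ together with proposition \ref{yan}'', and your write-up simply fills in those details (locating $\beta_j(m)$ in a term of (\ref{taujchi}), applying proposition \ref{yan} to the relevant simple $\uqslt$-constituent, and lifting back via the commuting square and the injectivity of $\tau_j$).
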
 

Also, in proposition \ref{1impliesrule} below, we will need the following consequence of the Frenkel-Mukhin algorithm.
\begin{theorem}[\cite{FM}]    
Every monomial $m'\neq Y_{i,a}$ in $\chi_q(V_{i,a})$ is of the form $m A_{j,aq^{r+1}}^{-1}$ for some $j\in I$ and some $r \in \mathbb Z$, where $m$ is a monomial in $\chi_q(V_{i,a})$ having $Y_{j,aq^r}$ as a factor.\label{lowprop}    
\end{theorem}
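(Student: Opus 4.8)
The plan is to run the Frenkel--Mukhin algorithm ``one step backwards'': to exhibit each non-highest monomial $m'$ as the image of a single $\uqsl j$-lowering applied to another monomial already present in $\chi_q(V_{i,a})$. First I would note that, since fundamental representations have $Y_{i,a}$ as their \emph{unique} dominant monomial \cite{FM}, any $m'\neq Y_{i,a}$ fails to be dominant. Hence there is a node $j\in I$ for which $\beta_j(m')$ is not $j$-dominant, i.e.\ it contains at least one factor $Y_{j,e}^{-1}$. Fix such a $j$ and let $Y_{j,e}^{-1}$ be the \emph{leftmost} inverse factor of $\beta_j(m')$.

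Next I would work inside $\uqsl j\cong\uqslt$ and produce a predecessor there. By (\ref{taujchi}) the term of $\tau_j(\chi_q(V_{i,a}))$ corresponding to $m'$ has the form $\beta_j(m')\otimes N_p$, where $M':=\beta_j(m')$ is a monomial of the genuine $\uqslt$-character $\chi_q^{(j)}(V_p)$. Using the segment decomposition (\ref{Vm+}) and the explicit Kirillov--Reshetikhin character (\ref{chiW}), a non-$j$-dominant $M'$ always has a predecessor in the same $\uqslt$-character: writing $c=eq^{-1}$, the monomial $M:=M'\,\beta_j(A_{j,c})$ again occurs in $\chi_q^{(j)}(V_p)$, since $M'\,\beta_j(A_{j,c})$ merely undoes, in the single segment responsible for the leftmost $Y_{j,e}^{-1}$, the lowering step of (\ref{chiW}) that created it. Moreover, because $e$ was chosen leftmost, $Y_{j,cq^{-1}}^{-1}=Y_{j,eq^{-2}}^{-1}$ is \emph{not} a factor of $M'$, so the factor $Y_{j,cq^{-1}}$ survives in $M$.

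It then remains to lift this local predecessor back to the full $q$-character. Since $M$ and $M'$ lie in the same $\uqslt$-factor, both $M\otimes N_p$ and $M'\otimes N_p$ occur in $\tau_j(\chi_q(V_{i,a}))$. The commuting square (\ref{tauj}) says precisely that $\tau_j$ intertwines multiplication by $A_{j,c}^{-1}$ with multiplication by $\beta_j(A_{j,c}^{-1})\otimes 1$; together with the injectivity of $\tau_j$ this forces the $\tau_j$-preimage $m$ of $M\otimes N_p$ to be a monomial of $\chi_q(V_{i,a})$ satisfying $m'=m\,A_{j,c}^{-1}$. Finally, recalling from (\ref{onlyainvs}) that every spectral parameter occurring in $\chi_q(V_{i,a})$ lies in $aq^{\mathbb Z}$, we may write $c=aq^{r+1}$ for some $r\in\mathbb Z$; then $m'=m\,A_{j,aq^{r+1}}^{-1}$ with $Y_{j,cq^{-1}}=Y_{j,aq^r}$ a factor of $m$, exactly as required.

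The main obstacle is the lifting step: knowing that a predecessor exists at the level of the single-node character $\chi_q^{(j)}(V_p)$ is not by itself enough, since a priori the corresponding monomial of the full character could be absent, or could differ from $m'$ by more than one $A_{j,c}^{-1}$. What makes the argument go through is the sharp form of the refinement $\tau_j$ recorded in (\ref{taujchi})--(\ref{tauj}): its injectivity identifies the monomials of $\chi_q(V_{i,a})$ with the terms of $\tau_j(\chi_q(V_{i,a}))$, and the commuting square guarantees that a single $\uqslt$-lowering at node $j$ lifts to multiplication by exactly one $A_{j,c}^{-1}$. A secondary point requiring care is that the surviving $Y$-factor of $M$ carries the precise spectral parameter $cq^{-1}$; this is where the leftmost choice of $e$ is used, and it is what pins down the index $r$ in the statement.
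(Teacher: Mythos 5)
The paper does not actually prove this statement: it is imported wholesale from \cite{FM}, as a consequence of the correctness of the Frenkel--Mukhin algorithm for $q$-characters with a unique dominant monomial. So your proposal has to stand on its own, and there it has one genuine gap, located exactly where the paper's closest in-house analogue, proposition \ref{yan}, takes the most care. Your overall architecture is right: reduce to $\uqslt$ via $\tau_j$, find a predecessor there, and lift it back through the commuting square (\ref{tauj}) using injectivity of $\tau_j$; that lifting step and the bookkeeping of spectral parameters are fine. The flaw is the claim that $M:=M'\,\beta_j(A_{j,eq^{-1}})$, with $Y_{j,e}^{-1}$ the leftmost inverse factor of the \emph{product} monomial $M'=\beta_j(m')$, again occurs in $\chi_q^{(j)}(V_p)$. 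Writing $M'=\prod_t M'_t$ as in (\ref{Vm+}), the only step that can be undone inside a single factor $\chi_q(W^{(r_t)}_{a_t})$ is the one sitting at $e_tq^{-1}$, where $e_t$ is the leftmost inverse position \emph{of that factor}: the monomials of (\ref{chiW}) form a single chain, and multiplying a middle member by $A_{b}$ for any other $b$ leaves the chain. Since inverse factors of one $M'_t$ can be cancelled by positive factors of another (nested segments are in general position, so this genuinely occurs), the leftmost surviving inverse position of $M'$ need not be the leftmost inverse position of any single factor, and then your $M$ is simply not a monomial of $\chi_q^{(j)}(V_p)$.

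Concretely, take the simple $\uqslt$-module $W^{(4)}_{aq^{3}}\otimes W^{(1)}_{aq^{4}}$ (segments $\{a,aq^2,aq^4,aq^6\}\supset\{aq^4\}$, hence in pairwise general position) and the monomial $M'=\bigl(Y_{a}Y_{aq^{4}}^{-1}Y_{aq^{6}}^{-1}Y_{aq^{8}}^{-1}\bigr)\cdot Y_{aq^{4}}=Y_{a}Y_{aq^{6}}^{-1}Y_{aq^{8}}^{-1}$. Its leftmost inverse factor is $Y_{aq^{6}}^{-1}$, yet $M'A_{aq^{5}}=Y_{a}Y_{aq^{4}}Y_{aq^{8}}^{-1}$ is not among the ten monomials of $\chi_q\bigl(W^{(4)}_{aq^{3}}\otimes W^{(1)}_{aq^{4}}\bigr)$. (The conclusion of the theorem survives here, since $M'A_{aq^{3}}$ is a monomial containing $Y_{aq^{2}}$ --- but not via your recipe.) The repair is not hard: among the non-dominant factors $M'_t$, pick one whose own leftmost inverse position $e_{t_0}$ is leftmost among all the $e_t$; then $M'\beta_j(A_{j,e_{t_0}q^{-1}})$ does occur in $\chi_q^{(j)}(V_p)$, and minimality of $e_{t_0}$ ensures no factor contributes an inverse at $e_{t_0}q^{-2}$, so the required factor $Y_{j,e_{t_0}q^{-2}}$ survives the cancellations. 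With that substitution (and a word on why the terms of (\ref{taujchi}) are characters of simple $\uqsl j$-modules, so that (\ref{Vm+}) applies), your argument closes.
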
  
Equivalently but more intuitively, every monomial apart from the highest one is obtained from some (at least one) other monomial by a ``lowering step'' consisting of a replacement of the form    
\be Y_{j,aq^r} \mapsto Y_{j,aq^r} A_{j,aq^{r+1}}^{-1} = Y_{j,aq^{r+2}}^{-1} \prod_{\nbr k j} Y_{k,aq^{r+1}}.\label{lowering}\ee    
The $q$-characters of any fundamental representation $V_{i,a}$ thus has the structure of a connected directed graph, whose nodes are the monomials and whose edges are labelled by factors $A_{i,a}^{-1}$. (An example is shown in figure \ref{charfig}.)    

Finally, in the proof of lemma \ref{qmonlemma}, we will need the following results from \cite{FM}. We shall say that a monomial $m$ has \emph{compact support of length $n$ and base $d$} if $m \in \mathbb Z [Y^{ \pm 1}_{l,dq^r}]_{l \in I, 0 \leq r \leq n}$. Combining lemma 6.1 and 6.13 of \cite{FM}, we have
\begin{lemma}
All the monomials in $\chi_q(V_{l,d})$, where $l\in I$ and  $d\in \Cx$, have compact support of length $h$ and base $d$. 
\end{lemma}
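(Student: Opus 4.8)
The plan is to reduce the statement to a claim about spectral parameters: writing the spectral parameter of any variable as $dq^r$ and calling $r$ its \emph{index}, I must show that whenever $Y_{k,b}^{\pm 1}$ occurs in a monomial of $\chi_q(V_{l,d})$ one has $b=dq^r$ with $0\le r\le h$. I would treat the base-$d$ property together with the lower bound $r\ge 0$ by a direct induction along the lowering graph, and isolate the upper bound $r\le h$ as a separate step, since it is the only place where the Coxeter number — and hence genuinely global information about the module — enters.

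For the base and the lower bound I would induct along the connected directed graph of $\chi_q(V_{l,d})$ recalled after Theorem \ref{lowprop}, rooted at the highest monomial $Y_{l,d}$. The root involves only the spectral parameter $d=dq^0$, so it has base $d$ and index $0$. By Theorem \ref{lowprop} every other monomial is obtained by a lowering step (\ref{lowering}), that is, by multiplying a monomial already containing $Y_{j,dq^r}$ by some $A_{j,dq^{r+1}}^{-1}$. By the definition of $A_{j,a}$, the factor $A_{j,dq^{r+1}}^{-1}$ involves only the spectral parameters $dq^r,dq^{r+1},dq^{r+2}$, so a lowering step preserves the base-$d$ property; and since, by the induction hypothesis, the lowered variable $Y_{j,dq^r}$ has index $r\ge 0$, the only new indices introduced are $r+1,r+2\ge 1$. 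Hence every monomial has base $d$ and all its indices satisfy $r\ge 0$. This is the content one extracts from the lowering structure of \cite{FM}.

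The hard part is the upper bound $r\le h$, and it is the main obstacle: lowering steps only raise indices, so a forward induction along the graph furnishes no ceiling at all, and one must instead import a global feature of the fundamental module. The cleanest such input is the duality of the fundamental $q$-character established in \cite{FM}: writing $k\mapsto k^*$ for the involution of $I$ defined by $-w_0\lambda_k=\lambda_{k^*}$, where $w_0$ is the longest element of $W$, the set of monomials of $\chi_q(V_{l,d})$ is invariant under the involution $Y_{k,dq^r}\mapsto Y_{k^*,dq^{h-r}}^{-1}$, which sends the highest monomial $Y_{l,d}$ to the antidominant monomial $Y_{l^*,dq^h}^{-1}$. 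Granting this, the upper bound follows from the lower bound by symmetry: if $Y_{k,dq^r}^{\pm 1}$ occurs in some monomial, then $Y_{k^*,dq^{h-r}}^{\mp 1}$ occurs in another, whence $h-r\ge 0$, i.e. $r\le h$. Equivalently, one may read the graph backwards from the unique antidominant monomial $Y_{l^*,dq^h}^{-1}$, which sits at index $h$, and observe that each raising step introduces only strictly smaller indices, so that connectedness forces every index to be $\le h$.

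Combining the two bounds, every monomial of $\chi_q(V_{l,d})$ lies in $\mathbb Z[Y^{\pm 1}_{k,dq^r}]_{k\in I,\,0\le r\le h}$, i.e. has compact support of length $h$ and base $d$. I expect the only real difficulty to be justifying the upper bound — pinpointing where the Coxeter number $h$ enters — which is exactly the information abstracted into the structural results of \cite{FM}; the base and lower bound are an essentially formal consequence of the lowering graph already recalled in the excerpt.
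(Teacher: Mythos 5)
Your argument is correct in substance, but it is worth knowing that the paper does not actually prove this lemma at all: its entire ``proof'' is the citation ``combining lemma 6.1 and 6.13 of \cite{FM}'', which are precisely the two halves you reconstruct (base plus lower bound, and upper bound, respectively). Your first half --- induction along the lowering graph rooted at $Y_{l,d}$, using theorem \ref{lowprop} and the fact that $A_{j,dq^{r+1}}^{-1}$ only involves indices $r,r+1,r+2$ --- is airtight given the results the paper recalls, and is essentially how \cite{FM} prove their lemma 6.1. Your second half correctly isolates the only nontrivial point, namely where $h$ enters, and the route via the lowest monomial is the right one. The one caution is a risk of circularity in how you source the duality: the form in which the paper itself quotes it (proposition 6.18 of \cite{FM}, in the proof of lemma \ref{lemmarightneg}) is stated with $n\in\{0,1,\dots,h\}$, i.e.\ it already has compact support built into its statement, so quoting it to \emph{derive} compact support would be begging the question unless one checks that \cite{FM} establish the duality (or at least the identification of the unique antidominant monomial $Y_{\bar l,dq^{h}}^{-1}$) independently --- which they do, but that is exactly the content of their lemma 6.13 and its neighbours. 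Your closing ``equivalently'' formulation --- run the raising-step analogue of theorem \ref{lowprop} upward from the antidominant monomial $Y_{\bar l,dq^{h}}^{-1}$ at index $h$ --- is therefore the cleaner, non-circular way to phrase the upper bound, provided you cite \cite{FM} both for the uniqueness and location of the antidominant monomial and for the raising-step statement, neither of which is recalled in the paper. In short: you have supplied a genuine proof where the paper supplies a pointer; what the paper's approach buys is brevity, and what yours buys is an explicit demonstration that everything needed is already visible in the lowering-graph structure plus one global input.
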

\label{lemmacompact}
Moreover, a monomial
\be m = \prod_{(l, r) \in I \times \mathbb N_0} Y_{l, dq^r}^{p_{l,r}} \, , \qquad p_{l,r} \in \mathbb Z \ee
having compact support of length $n$ and base $d$ is said to be \emph{right negative} (resp. \emph{left positive}) if, in addition, there exists a $(k,s)\in I\times \mathbb N_0$ such that $p_{k,s}<0$ (resp. $p_{k,s}>0$) and for each $(l, r)\in I\times \mathbb N_0$ such that $p_{l,r} > 0$ (resp. $p_{l,r}<0$), $r < s$ (resp. $r > s$).
\begin{lemma}
\label{lemmarightneg} For all $i\in I$, $a\in \Cx$, in the $q$-character $\chi_q(V_{i,a})$  
\begin{enumerate}[i)]
\item every monomial except for the highest weight monomial, $Y_{i,a}$, is right negative, and \label{RN}
\item every monomial except for the lowest weight monomial, $Y_{\bar{\imath},aq^{h}}^{-1}$, is left positive. \label{LP}
\end{enumerate}
\end{lemma}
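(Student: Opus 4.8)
The plan is to prove the two parts separately: part (\ref{RN}) directly from the lowering structure of Theorem~\ref{lowprop}, and then part (\ref{LP}) from part (\ref{RN}) by a reflection symmetry of the $q$-character. First it is convenient to reformulate the two notions using compactness. By Lemma~\ref{lemmacompact} every monomial of $\chi_q(V_{i,a})$ is supported on columns $0\le r\le h$, so for a nonzero monomial the largest (resp. smallest) $r$ carrying a nonzero power is well defined; call these the rightmost and leftmost occupied columns. Unwinding the definitions, $m$ is right negative precisely when every factor $Y_{l,aq^r}$ in its rightmost occupied column has negative power, and left positive precisely when every factor in its leftmost occupied column has positive power. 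I will use these formulations throughout.

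For part (\ref{RN}) I would induct on the depth $n\ge 1$ of $m$, where $n$ is the number of $A^{-1}$-factors in the expansion (\ref{onlyainvs}); since the $\alpha_j$ are linearly independent, $n$ is determined by the weight of $m$, so it is a well-defined nonnegative integer and the induction is finite. For $n=1$, Theorem~\ref{lowprop} forces $m=Y_{i,a}A_{i,aq}^{-1}=Y_{i,aq^2}^{-1}\prod_{\nbr k i}Y_{k,aq}$, whose rightmost column is $r=2$ and contains only $Y_{i,aq^2}^{-1}$, so $m$ is right negative. For the inductive step, write $m=m_0 A_{j,aq^{r+1}}^{-1}$ as in Theorem~\ref{lowprop}, with $m_0$ of depth $n-1$ having $Y_{j,aq^r}$ as a factor; if $n-1=0$ we are back in the base case, so assume $m_0$ is right negative, with rightmost column $R_0$ all-negative. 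As $Y_{j,aq^r}$ is a positive factor of $m_0$ we have $r<R_0$. Now $A_{j,aq^{r+1}}^{-1}=Y_{j,aq^r}^{-1}Y_{j,aq^{r+2}}^{-1}\prod_{\nbr k j}Y_{k,aq^{r+1}}$ contributes a negative factor in column $r+2$ and positive factors in column $r+1$. Since $r<R_0$ forces $r+2\le R_0+1$, there are only two cases: if $r+2\le R_0$ the rightmost column of $m$ is still $R_0$ (all-negative, possibly reinforced by $Y_{j,aq^{R_0}}^{-1}$), while the new positive factors sit in column $r+1<R_0$; if $r+2=R_0+1$ the new rightmost column is $R_0+1$, occupied only by $Y_{j,aq^{R_0+1}}^{-1}$, and all positive factors lie in columns $\le R_0$. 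In either case $m$ is right negative, completing the induction.

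For part (\ref{LP}) I would introduce the reflection $\nu$ acting on monomials of compact support $0\le r\le h$ by $\nu(Y_{j,aq^r})=Y_{\bar{\jmath},aq^{h-r}}^{-1}$, extended multiplicatively. Because $\nu$ reverses the column order $r\mapsto h-r$ and inverts all powers, it interchanges right negative and left positive monomials and sends the highest weight monomial $Y_{i,a}$ to the lowest, $Y_{\bar{\imath},aq^h}^{-1}$. Hence, once one knows that $\nu$ fixes $\chi_q(V_{i,a})$ as a whole, part (\ref{LP}) is immediate from part (\ref{RN}): applying $\nu$, the images of all monomials except $Y_{i,a}$ are exactly all monomials except $Y_{\bar{\imath},aq^h}^{-1}$, and they are left positive.

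It remains to prove $\nu(\chi_q(V_{i,a}))=\chi_q(V_{i,a})$, and this is where the real work lies. My plan is to invoke the Frenkel–Mukhin characterization: $\chi_q(V_{i,a})$ is the unique Laurent polynomial whose $\uqsl j$-restrictions are all genuine $\uqslt$-characters and whose unique dominant monomial is $Y_{i,a}$. The map $\nu$ carries node $j$ to $\bar{\jmath}$ and permutes the $\uqslt$ Kirillov–Reshetikhin characters $\chi_q(W^{(r)}_{b})$ (reflecting each defining segment about $h/2$ and negating it), so $\nu(\chi_q(V_{i,a}))$ again has all $\uqsl j$-restrictions genuine; moreover, since $\nu$ inverts powers, its dominant monomials are the $\nu$-images of the antidominant monomials of $\chi_q(V_{i,a})$. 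The crux — and the step I expect to be the main obstacle — is therefore to show that $\chi_q(V_{i,a})$ has a \emph{unique} antidominant monomial, namely the lowest weight monomial $Y_{\bar{\imath},aq^h}^{-1}$; granting this, $\nu(\chi_q(V_{i,a}))$ has the single dominant monomial $\nu(Y_{\bar{\imath},aq^h}^{-1})=Y_{i,a}$, and the uniqueness clause of the Frenkel–Mukhin characterization forces $\nu(\chi_q(V_{i,a}))=\chi_q(V_{i,a})$. I would establish uniqueness of the antidominant monomial by applying Frenkel and Mukhin's unique-dominant-monomial theorem to the dual module $(V_{i,a})^{*}$, which is again a fundamental representation and whose dominant monomials correspond under dualization to the antidominant monomials of $V_{i,a}$.
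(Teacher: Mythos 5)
Your proposal is correct in outline but takes a genuinely different, and in part (\ref{LP}) a much longer, route than the paper. The paper disposes of both parts by citation: part (\ref{RN}) is Lemma 6.5 of \cite{FM}, and part (\ref{LP}) follows from Proposition 6.18 of \cite{FM}, which says that $\chi_q(V_{\bar\imath,aq^{-h}})$ and $\chi_q(V_{i,a})$ are exchanged by the map $Y_{j,aq^n}^{\pm 1}\leftrightarrow Y_{j,aq^{-n}}^{\mp 1}$; since that map swaps right-negative with left-positive monomials and highest with lowest weight monomials, part (\ref{RN}) applied to $V_{\bar\imath,aq^{-h}}$ immediately gives part (\ref{LP}) for $V_{i,a}$. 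Your induction for part (\ref{RN}) is sound and is essentially a self-contained reproof of the cited lemma: the reformulation of right-negativity via the rightmost occupied column is correct, the depth $n$ is well defined because the decomposition (\ref{onlyainvs}) is unique, and the case analysis on $r+2\le R_0$ versus $r+2=R_0+1$ correctly shows that no positive factor can reach or pass the rightmost all-negative column. This buys a proof from theorem \ref{lowprop} and lemma \ref{lemmacompact} alone, at the cost of some length.

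For part (\ref{LP}), however, note that the paper's argument does \emph{not} require any self-symmetry of a single $q$-character: it only needs a power-inverting, order-reversing bijection between the monomials of \emph{two} fundamental representations, which is exactly what Proposition 6.18 of \cite{FM} supplies. Your insistence on the fixed-point property $\nu(\chi_q(V_{i,a}))=\chi_q(V_{i,a})$ (which is true, being the composition of that bijection with a spectral shift and the diagram automorphism $j\mapsto\bar\jmath$) forces you through two steps that are asserted rather than proved. First, the ``uniqueness clause'' of the Frenkel--Mukhin characterization as you state it --- that $\chi_q(V_{i,a})$ is the unique Laurent polynomial with unique dominant monomial $Y_{i,a}$ all of whose restrictions are genuine $\uqslt$-characters --- is stronger than what \cite{FM} literally prove (they show their algorithm computes $\chi_q(V)$ when $\chi_q(V)$ has a unique dominant monomial; deducing that \emph{any} nonnegative polynomial with these properties coincides with the algorithm's output needs an additional argument, e.g.\ an induction on weight showing every non-dominant monomial is forced). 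Second, the statement that dominant monomials of $\chi_q\bigl((V_{i,a})^{*}\bigr)$ correspond under dualization to antidominant monomials of $\chi_q(V_{i,a})$ is itself a duality theorem of the same depth as Proposition 6.18 of \cite{FM} --- indeed it is essentially that proposition in disguise. So the detour through $\nu$ ultimately consumes the same external input the paper cites directly, plus extra machinery; if you grant that input, you may as well use it as the paper does and conclude in two lines.
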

\begin{proof}
Part \ref{RN}) is lemma 6.5 in \cite{FM}. Proposition 6.18 in \cite{FM} states (in the simply-laced case) that $\chi_q(V_{\bar\imath,aq^{-h}})$ and $\chi_q(V_{i,a})$ are related by exchanging $Y_{j,aq^n}^{\pm 1}\leftrightarrow Y_{j,aq^{-n}}^{\mp 1}$, for all $j\in I$, $n\in \{0,1,\dots,h\}$. This map sends right-negative monomials to left-positive monomials (and vice versa). So part \ref{RN}) for $\chi_q(V_{\bar\imath,aq^{-h}})$ implies part \ref{LP}) for $\chi_q(V_{i,a})$. \finproof
\end{proof}

\begin{corollary}
\label{corollary1fund}
Let $i \in I$ and $a \in \Cx$. The monomial $1$ does not occur in $\chi_q(V_{i,a})$.
\end{corollary}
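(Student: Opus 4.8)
The plan is to obtain this as an immediate consequence of Lemma \ref{lemmarightneg}, specifically part \ref{RN}). First I would observe that the monomial $1$ (the empty product, in which every exponent $p_{l,r}$ vanishes) is distinct from the highest weight monomial $Y_{i,a}$, since the latter has $p_{i,0}=1\neq 0$. Thus, if $1$ were to occur in $\chi_q(V_{i,a})$, it would fall under the scope of part \ref{RN}) of the lemma and would therefore have to be \emph{right negative}.

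The next step is to note that the monomial $1$ cannot be right negative, directly from the definition. Right negativity of a monomial $m=\prod_{(l,r)}Y_{l,dq^r}^{p_{l,r}}$ requires the existence of a pair $(k,s)$ with $p_{k,s}<0$; but every exponent in the monomial $1$ is zero, so no such pair exists. Hence $1$ is not right negative, contradicting the conclusion of the previous paragraph, and we conclude that $1$ does not appear in $\chi_q(V_{i,a})$. (Equivalently, one could run the same argument through part \ref{LP}), using that $1\neq Y_{\bar\imath,aq^{h}}^{-1}$ and that $1$ fails to be left positive for the same reason.)

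I do not anticipate any real obstacle here: the content is entirely in Lemma \ref{lemmarightneg}, and the corollary is just the remark that the all-zero exponent vector can be neither right negative nor left positive. The only point requiring a moment's care is confirming that $1$ genuinely differs from both the highest and lowest weight monomials, so that the relevant case of the lemma actually applies; this is clear since each of those two monomials carries a nonzero exponent.
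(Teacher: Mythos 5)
Your argument is correct and is essentially identical to the paper's own proof: both note that $1 \neq Y_{i,a}$ and that $1$ fails to be right negative (having no negative exponent), so Lemma \ref{lemmarightneg}\,\ref{RN}) excludes it. The extra detail you supply about the all-zero exponent vector is just an unpacking of the same one-line observation.
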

\begin{proof}
The monomial $1$ is not right negative and $1 \neq Y_{i,a}$. Thus, by lemma \ref{lemmarightneg}, it cannot appear in $\chi_q(V_{i,a})$. \finproof
\end{proof}
    
\section{Coxeter orbits and $q$-characters}\label{fuzsec}    
In this section we relate the geometry of the Coxeter orbits of $\g$-weights to the structure of $q$-characters of fundamental representations.     
Recalling our notations for the roots, weights and Coxeter element of $\g$ from the introduction, let us begin by noting the following identities.    
Write $\lambda_i = \lambda_i^\bl$ ($\lambda_i^\wh$) when $i\in I_\bl$ (respectively $I_\wh$). Then     
\be w_\bl \lambda_i^\bl = \lambda_i^\bl - \alpha_i =    
-\lambda_i^\bl + \sum_{\nbr j i}\lambda^\wh_j, \qquad     
    w_\bl \lambda_i^\wh = \lambda_i^\wh \label{wonl}\ee    
and likewise with $\wh \leftrightarrow \bl$.    
Thus    
\be \left(1 + w^{\pm 1}\right) \lambda_i^\wb =   \sum_{\nbr j i}    
\lambda_j^\bw \label{loweringrel}    
.\ee    
We also define    
\be P = \frac{2}{h} \sum_{n\in \Zh} \cos\left(\frac{2\pi n}{h}\right) w^n \label{s1proj},\ee    
which is the orthogonal (with respect to the Killing form $\langle \cdot, \cdot \rangle$) projector from the weight lattice of $\g$ to the $\exp\left(\pm 2\pi i /h\right)$-eigenplane of $w$.\footnote{Recall that the exponents of $\g$ are by definition those integers $s\in \Zh$ for which $\exp\left(2\pi i s/h\right)$ is an eigenvalue of $w$, and that $s=\pm 1$ are always exponents.}    
Let $\theta$ be the map which returns the signed angle between the projections of two given vectors in weight space into this plane, i.e. the map defined by    
\be \cos \theta(\mu,\rho) = \frac{\langle P \mu, P\rho\rangle}{\sqrt{\langle P\mu, P\mu \rangle \langle P\rho, P \rho \rangle}}\, ; \qquad \mbox{im} (\theta) = (-\pi,\pi] \label{thetadef}\ee    
and, to fix the orientation, $\theta( \mu, w \mu) = +2\pi/h$.    
To fix a direction in the plane, let $\lambda$ be any vector in weight space such that $P\lambda \neq 0$. Our main result is then
\begin{theorem}\label{sec3thm}    
Let $i_1,i_2,i_3\in I$ and $a_1,a_2,a_3\in\mathbb C_{\!\neq 0}$. The following are equivalent:    
\begin{enumerate}[i)]    
 \item \label{thi} the $q$-character     
\be \chi_q\left( V_{i_1,a_1} \otimes V_{i_2,a_2} \otimes V_{i_3,a_3}\right) \ee    
includes the monomial $1$    
\item \label{thii} there exist $n_1,n_2,n_3\in \mathbb Z$ and $a\in \mathbb C_{\!\neq 0}$ such that    
\be w^{n_1}\lambda_{i_1} + w^{n_2} \lambda_{i_2} + w^{n_3} \lambda_{i_3} = 0\label{feqn}\ee    
and    
\be a_k = aq^{\frac{h}{\pi}\theta\left(\lambda, w^{n_k} \lambda_{i_k} \right)}, \quad k=1,2,3.\label{raps}\ee    
\end{enumerate}    
\end{theorem}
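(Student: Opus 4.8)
The plan is to reduce the three-fold tensor product statement to a statement about a single fundamental $q$-character, and then to establish a precise dictionary between monomials and Coxeter-orbit identities.

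First I would prove the reduction (this is the content of what the paper calls lemma \ref{qmonlemma}). The key observation is that $\chi_q$ is a ring homomorphism, so
\be
\chi_q\!\left( V_{i_1,a_1} \otimes V_{i_2,a_2} \otimes V_{i_3,a_3}\right) = \chi_q(V_{i_1,a_1})\,\chi_q(V_{i_2,a_2})\,\chi_q(V_{i_3,a_3}).
\ee
The monomial $1$ occurs in this product if and only if there is a choice of one monomial from each factor whose product is $1$. By the right-negativity and left-positivity structure (lemma \ref{lemmarightneg}) and the compact-support result (lemma \ref{lemmacompact}), together with corollary \ref{corollary1fund} which rules out $1$ appearing in a single fundamental character, one can argue that such a factorisation of $1$ must pair off the monomials in a constrained way. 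Concretely, I expect that $1 = m_1 m_2 m_3$ forces (after using that the highest monomial $Y_{i,a}$ and lowest monomial $Y_{\bar\imath, aq^h}^{-1}$ are the only non-right-negative/non-left-positive monomials) a situation where, say, $m_1 = Y_{i_1,a_1}$ is the highest monomial of its factor and the product $m_2 m_3$ must equal $Y_{i_1,a_1}^{-1}$. This means $Y_{i_1,a_1}^{-1}$ must appear in $\chi_q(V_{i_2,a_2})\,\chi_q(V_{i_3,a_3})$, i.e.\ a quadratic monomial of the form $Y_{i_1,a_1}^{-1}\cdot(\text{something})$ — equivalently, one wants to show that the occurrence of $1$ in the triple product is equivalent to the occurrence of a specific quadratic monomial $Y_{\bar\imath_3,\,a_3 q^{h}}^{-1} Y_{i_2,a_2}^{-1}$ (or a symmetric variant) in the single character $\chi_q(V_{i_1,a_1})$, using the duality $V_{i,a}\leftrightarrow V_{\bar\imath, aq^{-h}}$ recalled in the proof of lemma \ref{lemmarightneg}.

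Second, having reduced to a single $q$-character, I would set up the correspondence between its monomials and Coxeter-orbit identities. Every monomial is reached from $Y_{i,a}$ by a sequence of lowering steps (theorem \ref{lowprop}), each of which replaces $Y_{j,aq^r}\mapsto Y_{j,aq^r}A_{j,aq^{r+1}}^{-1}$. Under the identification of $Y_{i,aq^r}$ with a weight and using the identities (\ref{wonl})–(\ref{loweringrel}), a single lowering step corresponds exactly to acting by a reflection / by a factor of $w^{\pm1}$ on the associated weight: the relation $(1+w^{\pm1})\lambda_i^{\wb}=\sum_{\nbr j i}\lambda_j^{\bw}$ mirrors $A_{j,\cdot}^{-1}=Y_{j,\cdot}^{-1}Y_{j,\cdot}^{-1}\prod_{\nbr k j}Y_{k,\cdot}$. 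This is how one proves the ``forward'' direction (proposition \ref{1impliesrule}): any monomial, in particular the quadratic one produced by the reduction, encodes an identity $w^{n_1}\lambda_{i_1}+w^{n_2}\lambda_{i_2}+w^{n_3}\lambda_{i_3}=0$, and the exponents of $q$ appearing in the monomial's variables translate, via the projection $P$ and the angle map $\theta$, into exactly the rapidity constraints (\ref{raps}).

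The reverse direction is where I expect the main obstacle. Going from a Coxeter-orbit identity back to an actual monomial in the $q$-character is not automatic, because (as the \emph{regularity} caveat in section \ref{sl2chars} shows) not every dominant or formally-allowed monomial genuinely occurs. The strategy here (the content of propositions \ref{routeprop} and \ref{inchiq}) would be constructive: given a solution of (\ref{feqn}), one exhibits an explicit \emph{route} of lowering steps from $Y_{i,a}$ that realises the desired quadratic monomial, and one verifies at each stage that the required $\uqslt$-completion actually fires. The crucial tool is proposition \ref{yang}: it guarantees that whenever a $j$-dominant monomial has $Y_a^n$ as a factor of $\beta_j(m)$ but not $Y_{aq^2}$, one of the two alternatives (lowering via $A_{j,aq}^{-1}$, or the presence of $m A_{j,aq^k}$) must hold, and this is precisely what lets one inductively follow the geometry of the Coxeter orbit without falling foul of non-regularity. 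Managing this induction — showing that the geometric identity always steers proposition \ref{yang} into alternative i) along the whole route, so that the monomial is reached — is the technical heart of the argument and, I anticipate, the step requiring the most care.
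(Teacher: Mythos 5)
Your outline follows the same three-stage strategy as the paper: reduce to a quadratic monomial in a single fundamental $q$-character, read the Coxeter-orbit identity off the lowering steps (the easy direction), and reconstruct the monomial from the identity (the hard direction). One slip in the reduction: the quadratic monomial cannot have the form $Y^{-1}_{\bar\imath_3,a_3q^{h}}Y^{-1}_{i_2,a_2}$ with two negative factors, because, being neither the highest nor the lowest monomial of its character, it must by lemma \ref{lemmarightneg} be simultaneously right negative \emph{and} left positive, which forces exactly one positive and one negative factor. The correct identification (after ordering rapidities so that $a_2=a_1q^{r-h}$, $a_3=a_1q^{s}$ with $r-h\leq 0\leq s$) is that $m_2$ is the \emph{lowest} monomial $Y^{-1}_{\bar\imath_2,a_1q^{r}}$ of its factor, $m_3$ is the \emph{highest} monomial $Y_{i_3,a_1q^{s}}$ of its factor, and the quadratic monomial $Y_{\bar\imath_2,a_1q^{r}}Y^{-1}_{i_3,a_1q^{s}}$ sits in $\chi_q(V_{i_1,a_1})$; the compact-support lemma is then needed to exclude the remaining assignments of highest/lowest to the three tensor factors.

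The more serious issue is that the converse direction, which carries essentially all of the technical weight, is deferred rather than proved: saying that ``one exhibits an explicit route of lowering steps and verifies at each stage that the $\uqslt$-completion fires'' names the goal without supplying the mechanism. What is missing is (a) a procedure converting the identity $\lambda_{i_1}-w^{n}\lambda_{\bar\imath_2}+w^{m}\lambda_{i_3}=0$ into a candidate product of $A^{-1}$'s --- the paper does this by solving the recursion $g_i^{n-1}+g_i^{n+1}-\sum_{\nbr j i}g_j^{n}=c_i^{n}$ on the bipartite graph $I\times\{0,1,2,\dots\}$ with sources at the three nodes of the identity, and by observing that the resulting expression must cancel trivially because the fundamental weights are linearly independent; (b) a proof that the resulting exponents $g_j^{t}$ are all non-negative, obtained by comparing with the unobstructed lowering of $Y_{i_1,a}$ to the lowest monomial and by running the recursion from both the top source and the bottom source; and (c) the modified alternating black/white lowering procedure that preserves exactly one factor $Y_{\bar\imath_2,aq^{r}}$ from row $r$ onwards, which relies on the strict inequality $g'^{\,r+1}_{\bar\imath_2}=g^{r+1}_{\bar\imath_2}+1>0$. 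Without these ingredients proposition \ref{yang} cannot even be invoked along your proposed route, since its hypotheses require control of the sign and support of the exponents at each step; so the induction you anticipate cannot be set up from the outline as it stands.
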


Let us illustrate this with an example in the case of $E_6$, for which the Coxeter number is $h=12$. 
We label the nodes of the Dynkin diagram as in \cite{BCDS}:    
\be\nn    
\btp     
\draw (0,0) -- (1,0) -- (4,0);    
\draw (2,0) -- (2,1);    
\filldraw[fill=white] ++(0,0) circle (1mm) node[minimum height = 7mm,below] {$\mathbf{\bar  l}$};    
\filldraw[fill=black] ++(1,0) circle (1mm) node[minimum height = 7mm,below] {$\mathbf h$};    
\filldraw[fill=white] ++(2,0) circle (1mm) node[minimum height = 7mm,below] {$\mathbf H$};    
\filldraw[fill=black] ++(3,0) circle (1mm) node[minimum height = 7mm,below] {$\mathbf{\bar  h}$};    
\filldraw[fill=white] ++(4,0) circle (1mm) node[minimum height = 7mm,below] {$     \mathbf l$};    
\filldraw[fill=black] ++(2,1) circle (1mm) node[minimum height = 7mm,right] {$\mathbf L$};    
\etp    
\ee    
(This labelling is related to the masses of the corresponding particles, $\mathbf H/\mathbf h$eavy or $\mathbf L/\mathbf l$ight), in the Toda theory.) Among the solutions to the fusing rule (tabulated in \cite{BCDS}) is     
\be    w^{-2}\lambda_{\bar{\mathbf  l}} + \lambda_{\mathbf L} + w^{5}\lambda_{\mathbf h} = 0  \ee    
whose $P$-projection may be pictured as follows.     
\be    
\begin{tikzpicture}[baseline=0,scale=2.5]    
\foreach \x in {0,1,2,3,4,5,6,-1,-2,-3,-4,-5}    
{    
\draw[dotted] (0,0) -- (\x*30:1);    
}    
\draw[->,very thick] (0,0) -- (0:1.2559/1.5);    
\draw[->,very thick] (0,0) -- (-5*15:0.888/1.5);    
\draw[->,very thick] (0,0) -- (10*15:1.7156/1.5);    
    
\node at (0:1.2) {$\lambda_{ \mathbf L}$};    
\node at (-5*15:.8) {$w^{-2}\lambda_{\mathbf{\bar  l}}$};    
\node at (10*15:1.4) {$w^{5}\lambda_{\mathbf h}$};    
\end{tikzpicture}     
\ee    
So the theorem 
asserts, in particular, that $1$ occurs in the $q$-character    
\be \chi_q\left(V_{\mathbf{\bar l}, aq^{-5}} \otimes V_{\mathbf L,a} \otimes V_{\mathbf h,aq^{10}}\right).\ee

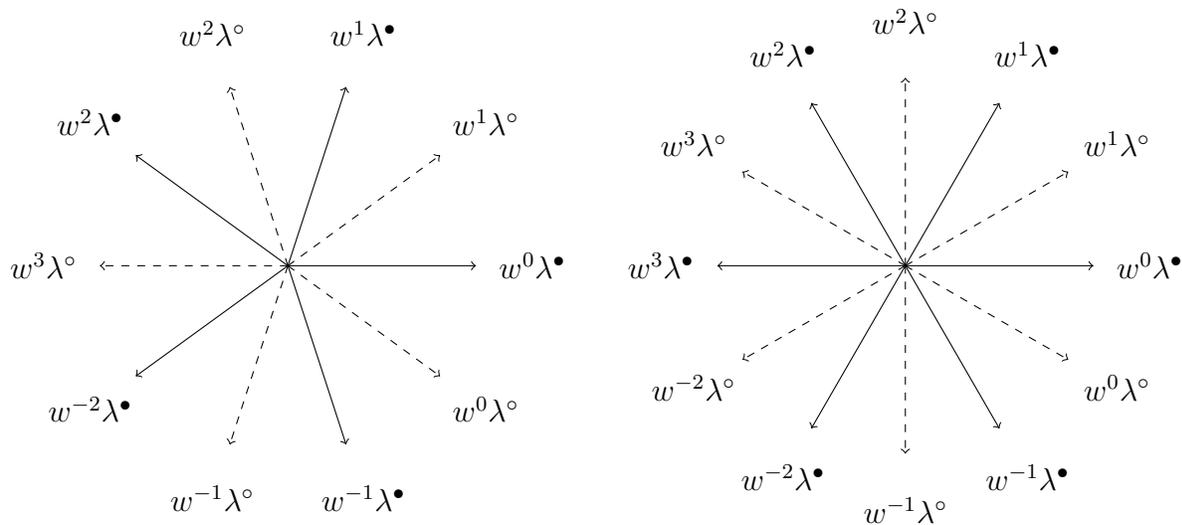
\begin{figure}    
\begin{tikzpicture}[baseline=0,scale=2.5]    
\foreach \x in {0,1,2,-1}    
{    
\draw[->] (0,0) -- (\x*72:1);    
\node at (\x*72:1.3) {$w^{\x}\lambda^\bl$};    
\draw[->,dashed] (0,0) -- ++(\x*72-36:1) ;    
\node at (\x*72-36:1.3) {$w^{\x}\lambda^\wh$};    
}    
\draw[->] (0,0) -- (-2*72:1);    
\node at (-2*72:1.3) {$w^{-2}\lambda^\bl$};    
\draw[->,dashed] (0,0) -- ++(-2*72-36:1) ;    
\node at (-2*72-36:1.3) {$w^{3}\lambda^\wh$};    
\etp    
$\quad$    
\begin{tikzpicture}[baseline=0,scale=2.5]    
\foreach \x in {0,1,2,-1,-2,3}    
{    
\draw[->] (0,0) -- (\x*60:1);    
\node at (\x*60:1.3) {$w^{\x}\lambda^\bl$};    
}    
\foreach \x in {0,1,2,-1,-2,3}    
{    
\draw[->,dashed] (0,0) -- ++(\x*60-30:1) ;    
\node at (\x*60-30:1.3) {$w^{\x}\lambda^\wh$};    
}    
\etp    
\caption{Picture of the $e^{\pm 2\pi i/h}$-eigenplane of $w$, for $h=5$ (left) and $h=6$ (right) showing the directions (though \emph{not} the lengths) of the projected Coxeter orbits of fundamental weights. Here $\lambda^\bl$ ($\lambda^\wh$) denotes any $\lambda_i$ such that $i\in I_\bl$ (respectively $I_\wh$).\label{s1pic}}     
\end{figure}    
\begin{proofof}{Theorem \ref{sec3thm}}     
We first express \ref{thii}) in a less symmetric but more convenient form. The reference vector $\lambda$ serves purely to make manifest the symmetry under permutations of $\{1,2,3\}$. It follows from (\ref{feqn}) that, by using this symmetry if necessary, we can assume    
\be -\pi < \theta(\lambda_{i_1}, w^{n_2}\lambda_{i_2}) \leq 0 <     
    \theta(\lambda_{i_1}, w^{n_3}\lambda_{i_3}) \leq \pi .\label{anglesrel}\ee    
Then, by the freedom in the choice of $a$, we can assume that $\lambda=\lambda_{i_1}$ and $n_1=0$. Let us also pick the two-colouring $I=I_\bl \sqcup I_\wh$ such that $i_1 \in I_\bl$. Given that $-\lambda_{\bar\imath_2}$ is in the Coxeter orbit of $\lambda_{i_2}$,\footnote{By definition $\lambda_{\bar\imath}$ is the fundamental weight in the Weyl orbit of $-\lambda_i$. It is given by  $\lambda_{\bar\imath} = -w_0 \lambda_i$ where $w_0$ is the longest element of the Weyl group, which may be written $w_0 = \underset{h}{\underbrace{w_\bl w_\wh \dots }} = \underset{h}{\underbrace{w_\wh w_\bl \dots}}$. Then since $w_\wh \lambda_i^\bl = \lambda_i^\bl$ and $w_\bl \lambda_i^\wh = \lambda_i^\wh$, one has $w_0\lambda_i^\bl = w^{\lfloor\frac{h}{2}\rfloor} \lambda_i^\bl$ and $w_0\lambda_i^\wh = w^{\lfloor\frac{h+1}{2}\rfloor} \lambda_i^\wh$.} we can introduce an $n\in\mathbb Z$ such that $w^{n_2}\lambda_{i_2}=-w^n\lambda_{\bar\imath_2}$. Let us also write $m:=n_3$. Then  (\ref{anglesrel}
 ) becomes
\be 0<\theta(\lambda_{i_1},w^n\lambda_{\bar\imath_2})\leq \pi,\quad 0 < \theta(\lambda_{i_1}, w^{m}\lambda_{i_3}) \leq \pi.\label{anglesrel2}\ee 
Thus the solution \ref{thii}) has been brought to the form    
\be \lambda_{i_1} - w^n \lambda_{\bar\imath_2} + w^{m} \lambda_{i_3}=0 \label{fusrew}\ee    
where, on examining figure \ref{s1pic}, one sees that (\ref{anglesrel2}) is equivalent to the following conditions on $n, m$ (modulo $h$), 
\be \label{ineqnm}    
0 < n \leq    
\begin{cases}     
     \lfloor \tfrac{h}{2} \rfloor & \bar\imath_2 \in I_{\bl}\\    
     \lfloor \tfrac{h+1}{2} \rfloor & \bar\imath_2 \in I_{\wh}    
    \end{cases}     
\qquad\qquad     
0 < m \leq \begin{cases}     
     \lfloor \tfrac{h}{2} \rfloor & i_3 \in I_{\bl}\\    
     \lfloor \tfrac{h+1}{2} \rfloor & i_3 \in I_{\wh},   
    \end{cases}     
\ee    
and that the angles in (\ref{raps}) are given by
\be a_1 = a,\quad a_2 = aq^{r-h}, \quad a_3 = aq^s \ee    
where    
\be r = \begin{cases} 2n & \bar\imath_2\in I_\bl \\ 2n-1 & \bar\imath_2 \in I_\wh    
\end{cases}\qquad\qquad     
    s = \begin{cases} 2m & i_3 \in I_\bl \\ 2m-1 & i_3 \in I_\wh.    
\end{cases} \label{defrs}\ee    
It is also clear that (\ref{feqn}) implies in particular that    
\be r < s \label{rnq},\ee 
for if not, the images of the three vectors $w^{n_1}\lambda_1,w^{n_2}\lambda_2,w^{n_3}\lambda_3$ would lie strictly inside some half-plane and certainly could not sum to zero.    
    
The remainder of the proof, which occupies the rest of this section, is structured as follows: Lemma \ref{qmonlemma} will re-express \ref{thi}) as a statement about the occurrence of quadratic monomials in $\chi_q(V_{i,a})$. Then \ref{thi}) $\Rightarrow$ \ref{thii}) will be an immediate corollary of proposition \ref{1impliesrule}, while \ref{thii}) $\Rightarrow$ \ref{thi}) is the content of propositions \ref{routeprop} and \ref{inchiq}.     
    
\end{proofof}    
    
\begin{lemma}\label{qmonlemma}    
The $q$-character     
\be \chi_q\left( V_{i,a} \otimes V_{j,b} \otimes V_{k,c} \right)    
=\chi_q( V_{j,b}) \chi_q( V_{i,a}) \chi_q( V_{k,c} )\ee    
can include the monomial $1$ only if $b=aq^{r-h}$ and $c=aq^{s}$ for some $r,s \in \mathbb Z$. Suppose, without loss of generality, that $s\geq 0\geq r-h$. (If not, rearrange the factors.) Then the monomial $1$ is present if and only if $\cq$ contains the quadratic monomial    
\be  Y_{\bar\jmath,bq^{h}} Y_{k,c}^{-1} .\ee  
\end{lemma}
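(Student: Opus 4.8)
The plan is to exploit that $\chi_q$ is a ring homomorphism, so that the product $\chi_q(V_{i,a})\chi_q(V_{j,b})\chi_q(V_{k,c})$ contains the monomial $1$ precisely when there are monomials $m_i\in\chi_q(V_{i,a})$, $m_j\in\chi_q(V_{j,b})$ and $m_k\in\chi_q(V_{k,c})$ with $m_im_jm_k=1$. Everything then reduces to a combinatorial study of this single equation, using only the compact-support statement of Lemma \ref{lemmacompact}, the right-negative/left-positive dichotomy of Lemma \ref{lemmarightneg}, and Corollary \ref{corollary1fund}.

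To obtain the constraint on $b$ and $c$, I would split the Laurent ring $\mathbb{Z}[Y_{l,d}^{\pm1}]$ as a tensor product over the cosets of the rapidities modulo $q^{\mathbb{Z}}$; the equation $m_im_jm_k=1$ then decouples into one equation in each coset. Since $m_i$, $m_j$ and $m_k$ are supported in the cosets of $a$, $b$ and $c$ respectively, if the coset of $b$ differed from those of both $a$ and $c$ then its equation would read $m_j=1$, which is impossible by Corollary \ref{corollary1fund}. Applying this to each of the three bases in turn forces all three cosets to coincide, i.e. $b=aq^{r-h}$ and $c=aq^{s}$ for integers $r,s$; we then normalise so that $s\geq 0\geq r-h$.

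By Lemma \ref{lemmacompact} the base-$a$ exponents occurring in $m_i$, $m_j$, $m_k$ lie in $[0,h]$, $[r-h,r]$ and $[s,s+h]$ respectively. No factor of $m_j$ below exponent $0$, nor of $m_k$ above $h$, can be cancelled by the other two monomials, so $m_j$ is supported in $[0,r]$ and $m_k$ in $[s,h]$; in particular $r\geq 0$, and $m_j,m_k\neq 1$ by Corollary \ref{corollary1fund}. The direction $(\Leftarrow)$ is then immediate: if $Y_{\bar\jmath,bq^{h}}Y_{k,c}^{-1}\in\chi_q(V_{i,a})$, take $m_j=Y_{\bar\jmath,bq^{h}}^{-1}$ and $m_k=Y_{k,c}$ (the always-present lowest- and highest-weight monomials) and $m_i=Y_{\bar\jmath,bq^{h}}Y_{k,c}^{-1}$, so that $m_im_jm_k=1$. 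For $(\Rightarrow)$ I would first treat the case $r<s$, where the supports $[0,r]$ of $m_j$ and $[s,h]$ of $m_k$ are disjoint. Writing $m_i=m_j^{-1}m_k^{-1}$, the monomial $m_i$ then has factors at two distinct exponents, so it is neither $Y_{i,a}$ nor the lowest-weight monomial, and Lemma \ref{lemmarightneg} makes it both left-positive and right-negative. Its leftmost factor comes from $m_j^{-1}$ and is positive, forcing $m_j$ to have a negative leftmost factor; so $m_j$ fails to be left-positive and must equal the lowest-weight monomial $Y_{\bar\jmath,bq^{h}}^{-1}$. Dually, the rightmost factor of $m_i$ comes from $m_k^{-1}$ and is negative, so $m_k$ fails to be right-negative and equals the highest-weight monomial $Y_{k,c}$. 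Hence $m_i=Y_{\bar\jmath,bq^{h}}Y_{k,c}^{-1}\in\chi_q(V_{i,a})$, as required.

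The disjointness $r<s$ is what makes the inheritance of the extremal factors of $m_i$ unambiguous, and handling the overlapping regime $r\geq s$ is the \emph{main obstacle}. There I expect to argue that $1$ simply cannot occur: the only candidate value $m_i=Y_{\bar\jmath,aq^{r}}Y_{k,aq^{s}}^{-1}$ carries a positive factor at exponent $r$ and a negative one at exponent $s$, so for $r\geq s$ it is not right-negative and, being different from $Y_{i,a}$, is excluded from $\chi_q(V_{i,a})$ by Lemma \ref{lemmarightneg}; one must then confirm directly that $m_im_jm_k=1$ admits no solution when $r\geq s$, which I would do by a positivity argument at the globally leftmost exponent, where left-positivity (Lemma \ref{lemmarightneg}) of every contributing factor yields a strictly positive total power. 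The degenerate boundary configurations, such as $r=0$, $s=h$, or coincident bases, in which some factor is pushed onto an extreme weight and the positivity argument must be applied with care, are the last points requiring individual attention.
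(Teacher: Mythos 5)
Your overall strategy is the paper's: reduce to $m_im_jm_k=1$, get the rapidity constraint from compact support (your coset decomposition is just a tidier phrasing of the paper's ``cross-cancellation'' argument), and then use the right-negative/left-positive dichotomy of lemma \ref{lemmarightneg} to pin down the three monomials. Your treatment of the case $r<s$ is sound and in fact slightly slicker than the paper's: because the supports $[0,r]$ and $[s,h]$ are disjoint you can read off directly that $m_j$ is the lowest-weight monomial and $m_k$ the highest-weight one, whereas the paper first shows abstractly that one of the three must be a highest-weight monomial and another a lowest-weight monomial (products of right-negative monomials are right-negative, hence $\neq 1$; dually for left-positive) and then eliminates the six possible assignments one by one using compact support and right-negativity/left-positivity.

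The genuine gap is in the overlapping regime $r\geq s$. Your proposed ``positivity argument at the globally leftmost exponent'' fails because not every contributing monomial is left-positive: by lemma \ref{lemmarightneg} the lowest-weight monomial is the one exception, and it is exactly the culprit. Concretely, take $m_j=Y_{\bar\jmath,aq^{r}}^{-1}$ (the lowest-weight monomial of $\chi_q(V_{j,aq^{r-h}})$, always present), $m_k=Y_{k,aq^{s}}$ (the highest-weight monomial of $\chi_q(V_{k,aq^{s}})$, always present) and $m_i=Y_{\bar\jmath,aq^{r}}Y_{k,aq^{s}}^{-1}$: the product is $1$ and the total power at \emph{every} exponent, including the leftmost, is exactly zero, so no positivity count can exclude this configuration. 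The only thing wrong with it is that $m_i$ is not right-negative when $r\geq s$ and hence is not a monomial of $\chi_q(V_{i,a})$ --- i.e.\ the contradiction must come from membership in the $q$-character, not from the combinatorics of the equation $m_im_jm_k=1$ alone. To close the case you need the paper's uniform argument: show first that in any solution one of the three monomials is the highest-weight monomial of its character and another the lowest-weight monomial of its character, so the third is a forced quadratic monomial that must be simultaneously right-negative, left-positive and of compact support with the appropriate base; running through the six assignments then leaves only $m_i=Y_{\bar\jmath,aq^{r}}Y_{k,aq^{s}}^{-1}$, whose right-negativity forces $r<s$. This handles all values of $r,s$ at once and removes the need for your separate overlapping-case argument.
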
    
\begin{proof}
Assume that there exist monomials 
\be m_j \quad \mbox{in} \quad \chi_q( V_{j,b}), \qquad m_i \quad \mbox{in} \quad \chi_q( V_{i,a}),\qquad m_k \quad \mbox{in} \quad \chi_q( V_{k,c} )\ee
such that 
\be 1 = m_j m_i m_k \,. \label{mimjmk1}\ee
It follows from corollary \ref{corollary1fund} that $m_i$, $m_j$, and $m_k$ differ from $1$. Thus, eq. (\ref{mimjmk1}) can only hold by virtue of a complete cross-cancellation of all the factors of the three monomials. Since by lemma \ref{lemmacompact}, $m_i$, $m_j$ and $m_k$ each have compact support of length $h$ and respective bases $a$, $b$ and $c$, such cross-cancellation can occur only if $b=aq^{r-h}$ and $c=aq^{s}$ for some $r,s \in \mathbb Z$, thus proving the first part of the lemma. 



As for the second part, we first prove that one of the three monomials has to be the highest weight monomial of the $q$-character where it appears while another one has to be the lowest weight monomial of the $q$-character where it appears. Suppose for a contradiction that all three monomials were right negative. Since the product of two right negative monomials is obviously right negative, it would follow that $m_j m_i m_k$ is right negative and therefore not equal to $1$, a contradiction. Suppose similarly that they were all left positive: then $m_j m_i m_k$ would be left positive, a contradiction. By lemma \ref{lemmarightneg}, the only monomial in the $q$-character of a fundamental representation that is not right negative (resp. left positive) is its highest weight monomial (resp. its lowest weight monomial). 

Now it follows that the only solution to (\ref{mimjmk1}) that is also compatible with the assumption that 
\be r-h \leq 0 \leq s \label{rapineq}\ee
is $m_j = Y_{\bar{\jmath}, aq^r}^{-1}$, $m_i= Y_{\bar{\jmath}, aq^r} Y_{k,aq^s}^{-1}$ and $m_k = Y_{k,aq^s}$. 
Indeed, we know that one of the three monomials, $m_i$, $m_j$ or $m_k$, has to be the quadratic monomial obtained by multiplying the inverses of the other two, namely the one which is the highest weight monomial of its $q$-character and the one which is the lowest. By lemma \ref{lemmarightneg}, this quadratic monomial should be both right negative and left positive. Assuming that (\ref{rapineq}) holds 
thus implies $m_j \neq Y_{i,a}^{-1} Y_{\bar{k},aq^{s+h}}$ and $m_k \neq Y_{j,aq^{r-h}}^{-1} Y_{\bar{\imath},aq^h}$. Furthermore, by lemma \ref{lemmacompact}, assuming that (\ref{rapineq}) holds also implies that $m_j \neq Y_{\bar{\imath},aq^h} Y_{k, aq^s}^{-1}$ and $m_k \neq Y_{\bar{\jmath},aq^r} Y_{i,a}^{-1}$ since $m_j$ and $m_k$, as monomials in $\chi_q(V_{j,aq^{r-h}})$ and $\chi_q(V_{k,aq^s})$ respectively, should have compact supports of length $h$ and respective bases $aq^{r-h}$ and $aq^s$. Therefore, it is clear that the quadratic monomial is $m_i$. Finally, lemma \ref{lemmacompact} implies that $m_i$, as a monomial of $\chi_q(V_{i,a})$, has compact support of length $h$ and base $a$ and hence that $m_i \neq Y_{j,aq^{r-h}}^{-1} Y_{\bar{k},aq^{s+h}}$. \finproof
\end{proof}

\begin{figure}    
\begin{center}    
\begin{tikzpicture}    
\matrix (m) [matrix of math nodes, row sep=3em,    
column sep=-1em, text height=1.5ex, text depth=0.25ex]    
{           & \YY 1 0                        & \\    
            & \YY 2 1  \MM 1 2               & \\    
            & \YY 3 2 \YY 4 2  \MM 2 3      & \\    
  \YY 3 2  \MM 4 4  &     & \YY 4 2  \MM 3 4   \\    
            & \YY 2 3  \MM 3 4  \MM 4 4      & \\    
            & \YY 1 4  \MM 2 5               & \\    
          & \MM 1 6                        & \\};    
\path[->,font=\scriptsize]    
(m-1-2) edge node[fill=white,inner sep=2pt] {$ \goin 1 1 $} (m-2-2)     
(m-2-2) edge node[fill=white,inner sep=2pt] {$ \goin 2 2 $} (m-3-2)    
(m-3-2) edge node[fill=white,inner sep=2pt] {$ \goin 4 3  $} (m-4-1)    
(m-3-2) edge node[fill=white,inner sep=2pt] {$ \goin 3 3  $} (m-4-3)    
(m-4-1) edge node[fill=white,inner sep=2pt] {$ \goin 3 3  $} (m-5-2)    
(m-4-3) edge node[fill=white,inner sep=2pt] {$ \goin 4 3  $} (m-5-2)    
(m-5-2) edge node[fill=white,inner sep=2pt] {$\goin 2 4 $}  (m-6-2)    
(m-6-2) edge node[fill=white,inner sep=2pt] {$\goin 1 5 $}  (m-7-2);    
\end{tikzpicture}$\quad$    
\begin{tikzpicture}    
\matrix (m) [matrix of math nodes, row sep=3em,    
column sep=-2em, text height=1.5ex, text depth=0.25ex]    
{           & \lambda_1                       & \\    
            & w\lambda_2  - w \lambda_1               & \\    
            & w\lambda_3 + w\lambda_4  -w^2 \lambda_2      & \\    
  w\lambda_3 -w^2 \lambda_4  &     & w\lambda_4 -w^2 \lambda_3   \\    
            & w^2 \lambda_2  - w^2 \lambda_3  -w^2 \lambda_4      & \\    
            & w^2 \lambda_1  -w^3 \lambda_2                & \\    
            & -w^3 \lambda_1                        & \\};    
\path[<->,font=\scriptsize]    
(m-1-2) edge node [auto] {$=$} (m-2-2)     
(m-2-2) edge node [auto] {$=$}(m-3-2)    
(m-3-2) edge node [auto] {$=$}(m-4-1)    
(m-3-2) edge node [below left] {$=$} (m-4-3)    
(m-4-1) edge node [auto] {$=$} (m-5-2)    
(m-4-3) edge node [above left] {$=$} (m-5-2)    
(m-5-2) edge node [auto] {$=$}  (m-6-2)    
(m-6-2) edge node [auto] {$=$} (m-7-2);    
\end{tikzpicture}    
\end{center}    
\be\btp \draw[draw=white,double=black,very thick] (2,0) -- ++(1,0) -- ++(60:1) ++(60:-1) -- ++(-60:1) ;    
\filldraw[fill=white] (3,0) circle (1mm) node [right] {$2$};    
\filldraw[fill=black] (2,0) circle (1mm)  node [below] {$1$};    
\filldraw[fill=black] (3,0)++(60:1) circle (1mm)  node [right] {$3$};;    
\filldraw[fill=black] (3,0)++(-60:1) circle (1mm) node [right] {$4$};;    
\etp\nn\ee    
\caption{Proposition \ref{1impliesrule} illustrated for the representation $V_{1,a}$ of $U_q(\widehat{\mathfrak d}_4)$. On the left is the graph of the character $\chi_q(V_{1,a})$; the edge label $\goin i n$ denotes multiplication by $A_{i,aq^n}^{-1}$. On the right are the corresponding identities involving the Coxeter orbits of fundamental weights.\label{charfig}}      
\end{figure}
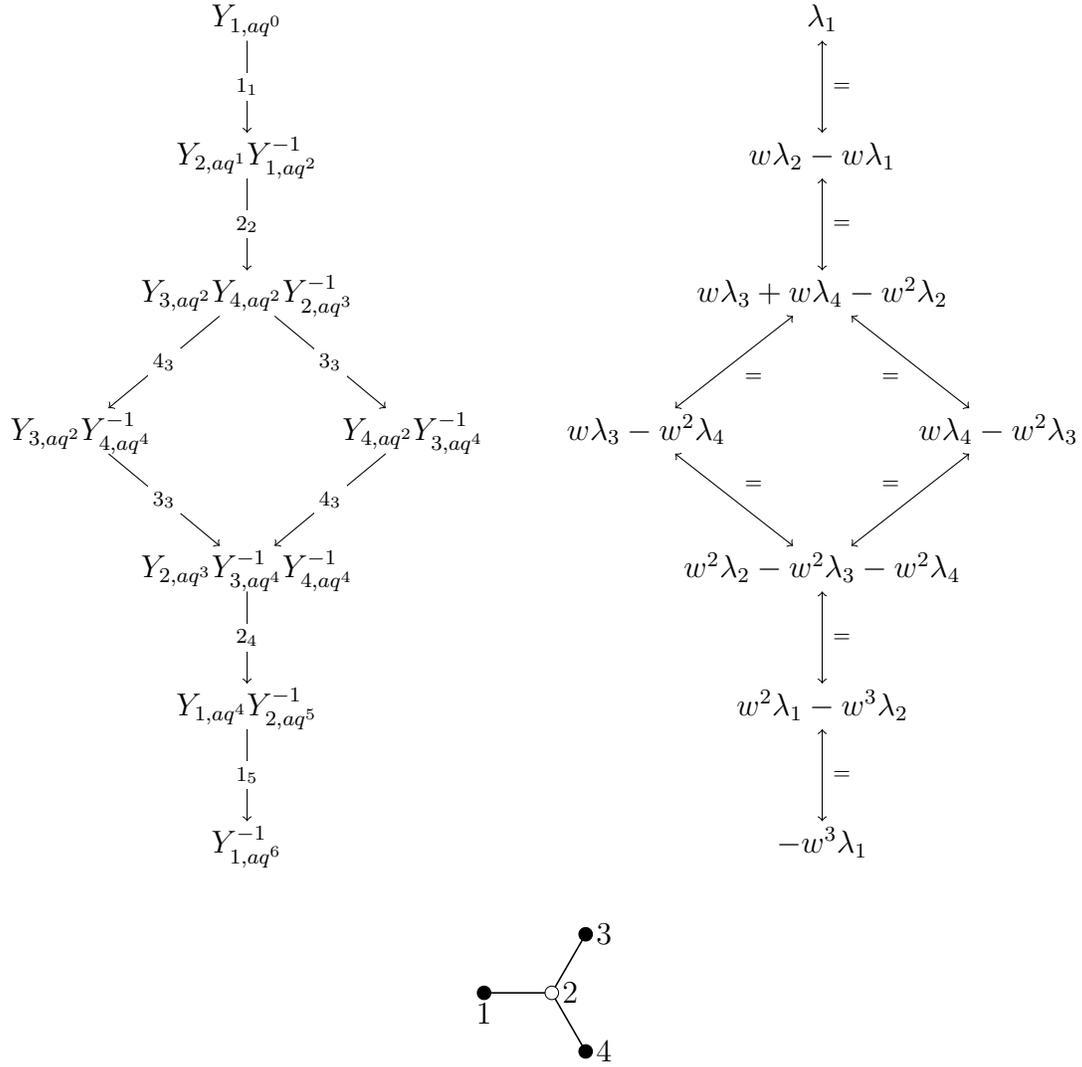    
    
\begin{proposition} \label{1impliesrule} For any given $i\in I$, choose a two-colouring of the    
Dynkin diagram of $\g$ such that $i\in I_\bl$. Then $q$-character $\chi_q\left(V_{i,a}\right)$ contains the monomial    
\be Y_{j_1,aq^{r_1}} \dots Y_{j_u,aq^{r_u}} Y^{-1}_{k_1,aq^{s_1}}    
\dots Y^{-1}_{k_v,aq^{s_v}} \ee    
only if    
\be \lambda_i = w^{n_1} \lambda_{j_1} + \dots + w^{n_u} \lambda_{j_u}     
              - w^{m_1} \lambda_{k_1} - \dots - w^{m_v} \lambda_{k_v}    
\label{lid}\ee    
where     
\be r_x = \begin{cases} 2n_x & j_x\in I_\bl \\ 2n_x-1 & j_x \in I_\wh    
\end{cases}\qquad\qquad     
    s_x = \begin{cases} 2m_x & k_x\in I_\bl \\ 2m_x-1 & k_x \in I_\wh    
\end{cases}.    
\label{rnrel}\ee    
\end{proposition}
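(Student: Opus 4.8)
The plan is to package the statement as a single homomorphism computation. Define a map $\Phi$ from the multiplicative monoid of Laurent monomials in the $Y_{l,aq^t}$ to the additive weight lattice of $\g$ by prescribing it on generators, motivated directly by (\ref{rnrel}):
\[
\Phi(Y_{l,aq^t}) = w^{n}\lambda_l,
\qquad
n = \begin{cases} t/2 & l \in I_\bl,\\ (t+1)/2 & l \in I_\wh, \end{cases}
\]
and extending multiplicatively, so $\Phi(Y^{-1})=-\Phi(Y)$. With this definition the identity (\ref{lid}) is read off factor-by-factor from the single equation $\Phi(m)=\lambda_i$, so the whole proposition reduces to showing that $\Phi$ sends \emph{every} monomial of $\chi_q(V_{i,a})$ to $\lambda_i$.

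First I would verify that $\Phi$ is well-defined on the variables that actually occur, i.e. that every such variable obeys the colour--parity rule ``$l\in I_\bl$ carries even $t$, $l\in I_\wh$ carries odd $t$'', so that $n$ is an integer. This holds for the highest monomial $Y_{i,a}=Y_{i,aq^0}$ (here $i\in I_\bl$, $t=0$), and it is preserved by the lowering step (\ref{lowering}): $Y_{j,aq^r}\mapsto Y_{j,aq^{r+2}}^{-1}\prod_{\nbr k j}Y_{k,aq^{r+1}}$ replaces the factor $Y_{j,aq^r}$ by $Y_{j}$ at the same parity $r+2$ together with its neighbours $k$ (of the opposite colour) at the opposite parity $r+1$. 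Since by (\ref{onlyainvs}) and theorem \ref{lowprop} every monomial is reached from $Y_{i,a}$ by a finite chain of such steps, the rule holds throughout; in particular the exponents $r_x,s_x$ of any monomial automatically have the parities asserted in (\ref{rnrel}), which is part of what must be shown.

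Next comes an induction on the graph of the $q$-character. By (\ref{onlyainvs}) each monomial has the form $Y_{i,a}\prod_l A_{j_l,c_l}^{-1}$, and I would induct on the number of $A^{-1}$ factors. The base case is $\Phi(Y_{i,a})=\lambda_i$. For the inductive step, theorem \ref{lowprop} writes any monomial $m\neq Y_{i,a}$ as $m=m'A_{j,aq^{r+1}}^{-1}$ for some monomial $m'$ of $\chi_q(V_{i,a})$ with one fewer $A^{-1}$ factor, whence
\[
\Phi(m) = \Phi(m') + \Phi\bigl(A_{j,aq^{r+1}}^{-1}\bigr) = \lambda_i - \Phi\bigl(A_{j,aq^{r+1}}\bigr)
\]
by the inductive hypothesis. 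Everything therefore collapses to the single identity $\Phi(A_{j,c})=0$, which is the heart of the argument. Writing $A_{j,aq^{r+1}}=Y_{j,aq^{r}}Y_{j,aq^{r+2}}\prod_{\nbr k j}Y_{k,aq^{r+1}}^{-1}$ and applying $\Phi$, the two $j$-factors contribute $(w^n+w^{n+1})\lambda_j$ (with $r=2n$ if $j\in I_\bl$ and $r=2n-1$ if $j\in I_\wh$), from which one subtracts a $w$-power of $\sum_{\nbr k j}\lambda_k$. For $j\in I_\bl$ this neighbour term is $w^{n+1}\sum_{\nbr k j}\lambda_k$ and (\ref{wonl})--(\ref{loweringrel}) give $\sum_{\nbr k j}\lambda_k=(1+w^{-1})\lambda_j$, so it equals $(w^{n+1}+w^{n})\lambda_j$ and cancels; for $j\in I_\wh$ the neighbour term is $w^{n}\sum_{\nbr k j}\lambda_k$ with $\sum_{\nbr k j}\lambda_k=(1+w)\lambda_j$, again yielding $(w^n+w^{n+1})\lambda_j$ and the same cancellation. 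Hence $\Phi(A_{j,c})=0$ in both cases, closing the induction.

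I expect the one place to get exactly right, rather than a deep obstacle, is matching the sign in $w^{\pm1}$ to the colour of $j$ so that the $j$-factors of $A_{j,c}$ precisely annihilate its neighbour-factors. This is forced by the fact that black weights are fixed by $w_\wh$ and white weights by $w_\bl$ — exactly the content of (\ref{wonl}) — which is why the $w^{-1}$ version of (\ref{loweringrel}) is the relevant one for $j\in I_\bl$ and the $w^{+1}$ version for $j\in I_\wh$. Beyond this bookkeeping of colours and parities, the proposition follows formally from the lowering-step description of $\chi_q(V_{i,a})$.
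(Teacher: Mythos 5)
Your proof is correct and follows essentially the same route as the paper's: an induction along the lowering steps of theorem \ref{lowprop}, with the key computation being that each factor $A_{j,aq^{r+1}}^{-1}$ corresponds, via (\ref{wonl})--(\ref{loweringrel}), to adding a rewriting of zero to the weight identity. Your packaging of this as a monoid homomorphism $\Phi$ with $\Phi(A_{j,c})=0$ is a slightly tidier formulation of exactly the observation the paper itself makes immediately after the proposition, comparing the definition of $A_{i,a}$ with the identities $0=\lambda_i^\bl+w\lambda_i^\bl-w\sum_{\nbr j i}\lambda_j^\wh$ and $0=\lambda_i^\wh+w\lambda_i^\wh-\sum_{\nbr j i}\lambda_j^\bl$.
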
    
\begin{proof}    
We must show that each monomial in $\cq$ is associated with an identity of the form (\ref{lid}) in the fashion specified. This is certainly true of the highest monomial, which is associated with the trivial identity:    
\be Y_{i,a}  \quad \longleftrightarrow \quad \lambda_i = \lambda_i.\ee    
We know that all monomials in $\cq$ are of the form (\ref{onlyainvs}). So suppose that, for some integer $k\geq 0$, we have successfully demonstrated the required identity for all monomials in $\cq$ that are $k$ lowering steps, in the sense of (\ref{lowering}), from $Y_{i,a}$. Let $m'\in \cq$ be any monomial $k+1$ steps away from $Y_{i,a}$.  By theorem \ref{lowprop}, we have that $m' = m A_{j,aq^{r+1}}^{-1}$, for some monomial $m\in\cq$ that is $k$ lowering steps from $Y_{i,a}$ and that has as a factor $Y_{j,aq^r}$. By supposition,  the identity to which $m$ is associated thus contains a summand $+w^n \lambda_j$, where $n$ and $r$ are related as in (\ref{rnrel}). We associate the lowering operation (\ref{lowering}) in the direction of the simple root $\alpha_j$ with one of the following re-writings of $\lambda_j$, to be chosen according to the colour of the node $j\in    
I$:     
\be \lambda^\bl_j \rr w \sum_{\nbr k j} \lambda_k^\wh    
- w \lambda_j^\bl \ee    
\be \lambda^\wh_j \rr   \sum_{\nbr k j} \lambda_k^\bl    
- w \lambda_j^\wh . \ee    
That these are identities follows from (\ref{loweringrel}). It is straightforward to check that they produce precisely the terms required for the resulting identity to be that associated to $m'=m A_{j,aq^{r+1}}^{-1}$ as the proposition requires.  This completes the inductive step, and the result follows by induction on $k$.    
\finproof    
\end{proof}    
    
An example is shown in figure \ref{charfig}. Now, in particular, the quadratic monomials required in lemma \ref{qmonlemma} correspond to identities of the form     
\be \lambda_i = w^n \lambda_{\bar\jmath} - w^m \lambda_k\label{fuzsol}.\ee    
This completes the proof of the \ref{thi}) $\Rightarrow$ \ref{thii}) part of theorem \ref{sec3thm}. It remains to prove the converse. 
In view of the preceding proposition, it is clear that what underpins this whole approach is the similarity between the definition    
\be A_{i,a} = Y_{i,aq^{-1}} Y_{i,aq} \prod_{\nbr j i } Y_{j,a}^{-1}\ee    
and the identities    
\be 0 = \lambda^\bl_i + w\lambda_i^\bl - w \sum_{\nbr j i } \lambda_j^\wh ,\qquad    
 0 = \lambda^\wh_i + w\lambda_i^\wh -  \sum_{\nbr j i } \lambda_j^\bl.\ee    
In trying to pass from a solution to the fusing rule to a monomial in the $q$-character, the first problem is thus to express the solution explicitly in terms of these identities.

\begin{figure}
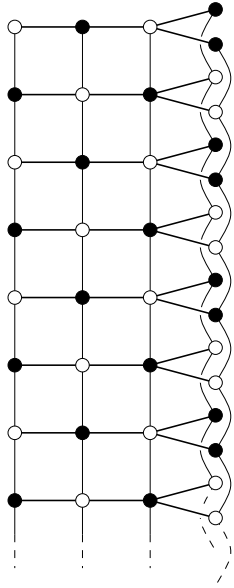
     
\be\nn \btp    
\foreach \y in {1,2,3} { \draw  (\y,0) -- ++(0,-7.5);    
                         \draw[dashed] (\y,-7.5) -- ++(0,-.5);}      
    
    
\foreach \x in {0,1,2,3,4,5,6}     
{    
\draw (3,-\x)++(-15:1) ..  controls ++(.3,-.5) .. ++(0,-1);    
\draw (3,-\x)++(+15:1) .. controls ++(-.3,-.5) .. ++(0,-1);    
}    
\draw[dashed] (3,-7)++(-15:1) ..  controls ++(.3,-.5) .. ++(0,-1);    
\draw[dashed] (3,-7)++(+15:1) .. controls ++(-.3,-.5) .. ++(0,-1);

\foreach \x in {0,2,4,6}     
{    
\draw[draw=white,double=black,very thick] (1,-\x) -- ++(2,0) -- ++(15:1) ++(15:-1) -- ++(-15:1) ;    
\filldraw[fill=white] (1,-\x) circle (1mm);    
\filldraw[fill=white] (3,-\x) circle (1mm);    
\filldraw[fill=black] (2,-\x) circle (1mm) ;    
\filldraw[fill=black] (3,-\x)++(15:1) circle (1mm);    
\filldraw[fill=black] (3,-\x)++(-15:1) circle (1mm);    
}    
\foreach \x in {1,3,5,7}    
{    
\draw[draw=white,double=black,very thick]   (1,-\x) -- ++(2,0) -- ++(15:1) ++(15:-1) -- ++(-15:1) ;    
\filldraw[fill=black] (1,-\x) circle (1mm);    
\filldraw[fill=black] (3,-\x) circle (1mm);    
\filldraw[fill=white] (2,-\x) circle (1mm) ;    
\filldraw[fill=white] (3,-\x)++(15:1) circle (1mm);    
\filldraw[fill=white] (3,-\x)++(-15:1) circle (1mm);    
}     
\etp \ee    
\caption{\label{ihatpic1} The bipartite graph $\Ih$ in the case $\g = \mathfrak d_5$.}    
\end{figure}

We begin by introducing some useful scaffolding. By a slight abuse of notation, let $I$ be the Dynkin diagram of $\g$, and consider the product graph $\Ih = I\times \{0,1,2,\dots \}$. The two-colouring of $I$ extends to a two-colouring $\Ih=\Ihw \sqcup \Ihb$ of the infinite graph. We picture $\Ih$ as a vertical stack of copies of $I$, and will refer to each copy of $I$ as a \emph{row} and to the set of nodes $(j,0), (j,1),\dots$ for any fixed $j\in I$ as a \emph{column}.     
(Figure \ref{ihatpic1} illustrates an example.)    
    
The black nodes of $\Ih$ are those of the form $(i,2n)$, $i\in I_\bl$ and $(i,2n-1)$, $i\in I_\wh$.     
We associate to each black node a factor $Y$ in the obvious way:    
\be (i,r) \in \Ihb \mapsto Y_{i,aq^r}.\ee    
We also associate to each black node $(i,r)$ a term $\y{i,r}$ of the form $w^n\lambda_i$, defined as follows:    
\be \y{i\in I_\bl,2n} :=  w^n\lambda_i^\bl, \qquad \y{i\in I_\wh,2n-1} :=  w^n\lambda_i^\wh.    
\ee    
The white nodes of $\Ih$ are those of the form $(i,2n)$, $i\in I_\wh$ and $(i,2n-1)$, $i\in I_\bl$. We associate to each white node $(i,r)$ the factor $A_{i,aq^r}$, and also an identity $\aaa{}$ among the terms $w^n\lambda_i$ at the neighbouring black nodes:    
\bea 0 = \aaa{i\in I_\wh,2n} &:=& w^n \left(\lambda^\wh_i + w\lambda_i^\wh -  \sum_{\nbr j i } \lambda_j^\bl\right)\\    
0 = \aaa{i\in I_\bl,2n-1} &:=& w^{n-1} \left(\lambda^\bl_i + w\lambda_i^\bl -  w\sum_{\nbr j i } \lambda_j^\wh\right);\eea    
that is, simply,    
\be  \aaa{i,r} := \y{i,r-1} + \y{i,r+1} - \sum_{\nbr j i} \y{j,r}.\ee    
    
Let $c$ and $g$ be integer-valued functions defined on the black and white nodes respectively     
\be c : \Ihb \rightarrow \mathbb Z; \quad (i,n) \mapsto c_i^n,\ee    
\be g: \Ihw  \rightarrow \mathbb Z; \quad (i,n) \mapsto 
 g_i^n.\ee    
One may then ask: when does the coefficient of a term $\y{i,n}$, with $n>0$, vanish in the expression     
\be \label{Eca}\mathscr E(c,g) := \sum_{(j,r) \in \Ihb} c_j^r \y{j,r} 
 - \sum_{(j,r) \in \Ihw} g_j^r \aaa{j,r}\, ? \ee    
Or, equivalently, when is the factor $Y_{i,aq^n}$ absent from the monomial     
\be m(c,g) := \left(\prod_{(j,r) \in \Ihb} \left(Y_{j,aq^r}\right)^{c_j^r}  \right)    
 \left(\prod_{(j,r) \in \Ihw} \left(A_{j,aq^r}\right)^{ - g_j^r}\right)\,\text{?}\ee     
It is clear that the answer is: if and only if    
\be g_i^{n-1} + g_i^{n+1} - \sum_{\nbr j i} g_j^n = 
 c_i^n.\label{sseqn}\ee      
Let us regard $c$ as a fixed source term. Then it is possible to satisfy (\ref{sseqn}) at every black node $(i,n)$ with $n>0$ by choosing an appropriate $g$. Assume that sufficiently far down the graph the source vanishes, i.e. that there is an $N$ such that $c_i^n=0$ for all $n>N$. Then, furthermore, the solution is unique if we specify also that $g_i^n = 0$ for all $n>N$, because the equation (\ref{sseqn}) at each row $n$ fixes uniquely the $g_i^{n-1}$ in the row above.     
    
\begin{proposition}\label{routeprop}    
Choose the two-colouring of $I$ such that $i_1\in I_\bl$. Suppose that we have a solution to the fusing equation (\ref{feqn}), written, as in (\ref{fusrew}), in the form    
\be \lambda_{i_1} - w^n \lambda_{\bar\imath_2} + w^m \lambda_{i_3} = 0,\label{fz}\ee    
with $n, m \in \mathbb Z$ subject to (\ref{ineqnm}). Then there exists a unique $g: \Ihw  \rightarrow \mathbb Z$
such that
\be\label{yas}   Y_{\bar\imath_2,aq^r} Y_{i_3,aq^s}^{-1} = Y_{i_1,a} \prod_{(j,t) \in \Ihw} \left(A_{j,aq^t}\right)^{-g_j^t} ,\ee    
where $r, s \in \mathbb Z$ are as in (\ref{defrs}), and such that, for some $N \in \mathbb N$, $g^n_i =0$ for all $n>N$.
\end{proposition}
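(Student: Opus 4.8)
The plan is to work entirely inside the scaffolding $\Ih$ and to read the required $g$ off the recurrence (\ref{sseqn}), using the fusing identity (\ref{fz}) to control the residue that survives in the bottom row. The point is that (\ref{routeprop}) asks only for the \emph{formal} factorisation (\ref{yas}); whether the intermediate lowering steps actually occur in $\chi_q(V_{i_1,a})$ is a separate matter, handled in proposition \ref{inchiq}. So here I just have to produce and pin down $g$.

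First I would choose the source $c$ to be supported on the two black nodes carrying the factors of the target quadratic monomial, namely $c_{\bar\imath_2}^{\,r}=1$ and $c_{i_3}^{\,s}=-1$, with all other $c_j^{\,t}=0$. Note that $r,s>0$ by (\ref{ineqnm}) and $r\neq s$ by (\ref{rnq}), so these are two genuinely distinct nodes of strictly positive height. Since $c$ vanishes for $t>\max(r,s)$, the discussion preceding the proposition supplies a unique $\tilde g:\Ihw\to\mathbb Z$ satisfying (\ref{sseqn}) at every black node $(i,t)$ with $t>0$ and with $\tilde g_i^{\,t}=0$ for $t$ large. By the very property characterising (\ref{sseqn}), the monomial $m(c,\tilde g)$ then contains no factor $Y_{i,aq^t}$ with $t>0$: for $0<t\le N$ this is the imposed recurrence, and for $t>N$ both $c$ and $\tilde g$ vanish. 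Hence every factor of $m(c,\tilde g)$ lives in the bottom row, so $m(c,\tilde g)=\prod_{i\in I_\bl}Y_{i,a}^{\,p_i}$, where $p_i$ is exactly the coefficient of $\y{i,0}=\lambda_i$ in $\mathscr E(c,\tilde g)$.

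The key step is then to evaluate $\mathscr E(c,\tilde g)$ in two ways. On one hand, since every $\aaa{j,t}$ is by construction the zero vector, only the source contributes, and using the definitions of $\y{j,t}$ together with (\ref{defrs}) one gets $\mathscr E(c,\tilde g)=\y{\bar\imath_2,r}-\y{i_3,s}=w^{n}\lambda_{\bar\imath_2}-w^{m}\lambda_{i_3}$; the fusing identity (\ref{fz}) rewrites this as $\mathscr E(c,\tilde g)=\lambda_{i_1}$. On the other hand, by the previous paragraph $\mathscr E(c,\tilde g)=\sum_{i\in I_\bl}p_i\lambda_i$. Because the fundamental weights are linearly independent, comparing these two expressions forces $p_i=\delta_{i,i_1}$, so $m(c,\tilde g)=Y_{i_1,a}$. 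Rearranging $Y_{\bar\imath_2,aq^r}Y_{i_3,aq^s}^{-1}\prod_{(j,t)\in\Ihw}(A_{j,aq^t})^{-\tilde g_j^{\,t}}=Y_{i_1,a}$ yields precisely (\ref{yas}) with $g=-\tilde g$, and the decay $g_i^{\,t}=0$ for $t>N$ is inherited from $\tilde g$.

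Uniqueness reduces to the uniqueness already established for (\ref{sseqn}): if two admissible choices both satisfied (\ref{yas}), their quotient would give $\prod_{(j,t)\in\Ihw}(A_{j,aq^t})^{e_j^{\,t}}=1$ for the finitely supported difference $e$; reading off the power of each $Y_{i,aq^t}$ shows that the corresponding source vanishes at every node of positive height, whence $e=0$ by the same downward recursion that determines $\tilde g$ row by row. I expect the only genuinely delicate point to be the bottom-row bookkeeping: the recurrence (\ref{sseqn}) is imposed only for $t>0$, and it is exactly this freedom at height $0$ that leaves the residue $\sum_{i\in I_\bl}p_i\lambda_i$ undetermined a priori. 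The whole weight of the argument therefore falls on converting the single weight identity (\ref{fz}) into the sharp statement $m(c,\tilde g)=Y_{i_1,a}$, which is achieved by the linear independence of the $\lambda_i$; so the care must go into verifying that nothing but a $\lambda_i$-combination survives at height $0$ and that its matching with $\lambda_{i_1}$ is forced term by term.
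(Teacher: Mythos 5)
Your proof is correct and follows essentially the same route as the paper's: both solve the recurrence (\ref{sseqn}) downward for a source function encoding the fusing identity and then use linear independence of the fundamental weights to pin down the residue left in the bottom row(s); your placing the source only at $(\bar\imath_2,r)$ and $(i_3,s)$, so that the residue is $\lambda_{i_1}$ rather than $0$, is a trivial reparametrisation of the paper's choice, which also puts $+1$ at $(i_1,0)$ and concludes $m(c,g)=1$. The one point to tighten is that the white nodes $(j,0)$ with $j\in I_\wh$ contribute factors $Y_{j,aq^{-1}}$ to $m(c,\tilde g)$, so the surviving part of $\mathscr E(c,\tilde g)$ is a priori a combination of \emph{all} the fundamental weights $\lambda_i^\bl$ and $\lambda_j^\wh$, not only those with $i\in I_\bl$ (the paper extends the graph to row $-1$ for exactly this reason) --- but your linear-independence comparison with $\lambda_{i_1}$ kills those terms just as well, so this is presentational rather than a gap.
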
     
\begin{proof} Let $c$ be the source function that vanishes everywhere except    
\be c_{i_1}^0 = +1, \qquad c_{\bar\imath_2}^r = -1, \qquad c_{i_3}^s = +1. \label{cdef}\ee    
Note that then (\ref{fz}) is $\mathscr E(c,0) =0$. Consider solving (\ref{sseqn}) for $g$ in the manner given above. The resulting expression $\mathscr E(c,g)$ has by construction no terms $\y{i,n}$ with $n>0$. So it can only be a linear combination of the $\y{i,0}= \lambda_i^\bl$ and~\footnote{For this proof only, we consider working on $I \times \{ -1, 0, 1 \dots \}$} $\y{i,-1}= \lambda_i^\wh$.
But of course $\mathscr E(c,g)=0$ identically, since all we have done is to add various re-writings of zero (the $\aaa{}$'s) to an expression (\ref{fz}) which was zero to begin with. Therefore, since the $\lambda_i$ are linearly independent, the identity $\mathscr E(c,g)=0$ must be trivial, in the sense that the expression on the right-hand side of (\ref{Eca}) consists entirely of cancelling pairs of terms and vanishes without appealing to properties of the Coxeter element. Consequently, we have also that $m(c,g)=1$, which, on rearranging, is (\ref{yas}) as required.\finproof    
\end{proof}     
\begin{figure}
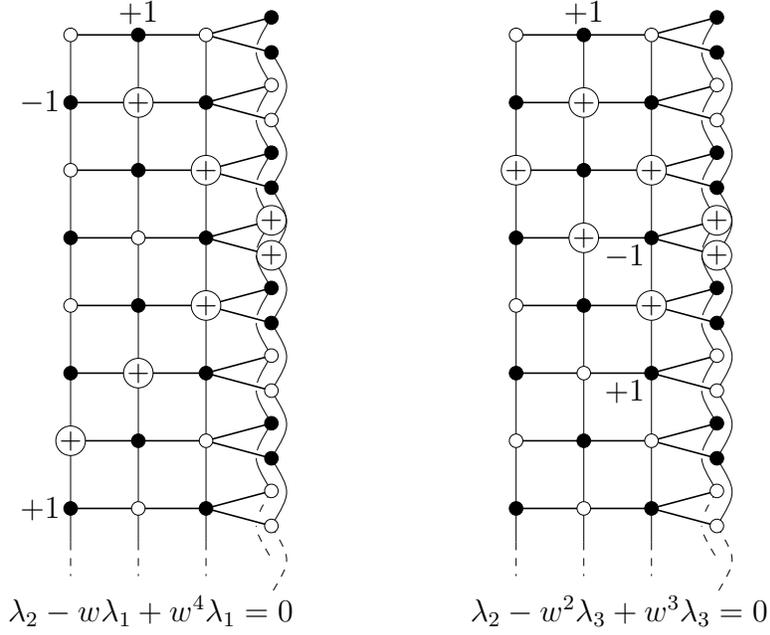
    
\be \begin{matrix}\btp    
\foreach \y in {1,2,3} { \draw  (\y,0) -- ++(0,-7.5);    
                         \draw[dashed] (\y,-7.5) -- ++(0,-.5);}      
\foreach \x in {0,1,2,3,4,5,6}     
{    
\draw (3,-\x)++(-15:1) ..  controls ++(.3,-.5) .. ++(0,-1);    
\draw (3,-\x)++(+15:1) .. controls ++(-.3,-.5) .. ++(0,-1);    
}    
\draw[dashed] (3,-7)++(-15:1) ..  controls ++(.3,-.5) .. ++(0,-1);    
\draw[dashed] (3,-7)++(+15:1) .. controls ++(-.3,-.5) .. ++(0,-1);    
    
\foreach \x in {0,2,4,6}     
{    
\draw[draw=white,double=black,very thick] (1,-\x) -- ++(2,0) -- ++(15:1) ++(15:-1) -- ++(-15:1) ;    
\filldraw[fill=white] (1,-\x) circle (1mm);    
\filldraw[fill=white] (3,-\x) circle (1mm);    
\filldraw[fill=black] (2,-\x) circle (1mm) ;    
\filldraw[fill=black] (3,-\x)++(15:1) circle (1mm);    
\filldraw[fill=black] (3,-\x)++(-15:1) circle (1mm);    
}    
\foreach \x in {1,3,5,7}    
{    
\draw[draw=white,double=black,very thick]   (1,-\x) -- ++(2,0) -- ++(15:1) ++(15:-1) -- ++(-15:1) ;    
\filldraw[fill=black] (1,-\x) circle (1mm);    
\filldraw[fill=black] (3,-\x) circle (1mm);    
\filldraw[fill=white] (2,-\x) circle (1mm) ;    
\filldraw[fill=white] (3,-\x)++(15:1) circle (1mm);    
\filldraw[fill=white] (3,-\x)++(-15:1) circle (1mm);    
}     
\draw (2,0) node [above] {$+1$};     
\draw (1,-1) node [left] {$-1$};     
\draw (1,-7) node [left] {$+1$};    
\draw (1,-6) node [inner sep = 0mm,circle,draw,fill=white] {\small +};     
\draw (2,-5) node [inner sep = 0mm,circle,draw,fill=white] {\small +};     
\draw (3,-4) node [inner sep = 0mm,circle,draw,fill=white] {\small +};     
\draw (3,-3)++(15:1) node [inner sep = 0mm,circle,draw,fill=white] {\small +};     
\draw (3,-3)++(-15:1) node [inner sep = 0mm,circle,draw,fill=white] {\small +};     
\draw (3,-2) node [inner sep = 0mm,circle,draw,fill=white] {\small +};     
\draw (2,-1) node [inner sep = 0mm,circle,draw,fill=white] {\small +};     
\etp &\qquad\qquad&    
 \btp     
\foreach \y in {1,2,3} { \draw  (\y,0) -- ++(0,-7.5);    
                         \draw[dashed] (\y,-7.5) -- ++(0,-.5);}      
\foreach \x in {0,1,2,3,4,5,6}     
{    
\draw (3,-\x)++(-15:1) ..  controls ++(.3,-.5) .. ++(0,-1);    
\draw (3,-\x)++(+15:1) .. controls ++(-.3,-.5) .. ++(0,-1);    
}    
\draw[dashed] (3,-7)++(-15:1) ..  controls ++(.3,-.5) .. ++(0,-1);    
\draw[dashed] (3,-7)++(+15:1) .. controls ++(-.3,-.5) .. ++(0,-1);    
    
\foreach \x in {0,2,4,6}     
{    
\draw[draw=white,double=black,very thick] (1,-\x) -- ++(2,0) -- ++(15:1) ++(15:-1) -- ++(-15:1) ;    
\filldraw[fill=white] (1,-\x) circle (1mm);    
\filldraw[fill=white] (3,-\x) circle (1mm);    
\filldraw[fill=black] (2,-\x) circle (1mm) ;    
\filldraw[fill=black] (3,-\x)++(15:1) circle (1mm);    
\filldraw[fill=black] (3,-\x)++(-15:1) circle (1mm);    
}    
\foreach \x in {1,3,5,7}    
{    
\draw[draw=white,double=black,very thick]   (1,-\x) -- ++(2,0) -- ++(15:1) ++(15:-1) -- ++(-15:1) ;    
\filldraw[fill=black] (1,-\x) circle (1mm);    
\filldraw[fill=black] (3,-\x) circle (1mm);    
\filldraw[fill=white] (2,-\x) circle (1mm) ;    
\filldraw[fill=white] (3,-\x)++(15:1) circle (1mm);    
\filldraw[fill=white] (3,-\x)++(-15:1) circle (1mm);    
}     
\draw (2,0) node [above] {$+1$};     
\draw (3,-3) node [below left=-.5mm] {$-1$};     
\draw (3,-5) node [below left=-.5mm] {$+1$};    
\draw (3,-4) node [inner sep = 0mm,circle,draw,fill=white] {\small +};     
\draw (3,-3)++(15:1) node [inner sep = 0mm,circle,draw,fill=white] {\small +};     
\draw (3,-3)++(-15:1) node [inner sep = 0mm,circle,draw,fill=white] {\small +};     
\draw (2,-3) node [inner sep = 0mm,circle,draw,fill=white] {\small +};     
\draw (3,-2) node [inner sep = 0mm,circle,draw,fill=white] {\small +};     
\draw (1,-2) node [inner sep = 0mm,circle,draw,fill=white] {\small +};     
\draw (2,-1) node [inner sep = 0mm,circle,draw,fill=white] {\small +};     
\etp \\    
\lambda_2 - w\lambda_1 + w^4 \lambda_1 = 0 && \lambda_2 - w^2 \lambda_3 + w^3 \lambda_3 = 0 \end{matrix} \nn    
\ee    
\caption{Two copies of $\Ih$ in the case $\g = \mathfrak d_5$, showing the solutions to the problem (\ref{sseqn}) for the source functions $c$ associated, as in proposition \ref{routeprop}, to the identities shown. $\oplus$ denotes a node at which $g=+1$; elsewhere $g=0$.\label{ihatpic}}    
\end{figure}    
    
The right-hand side of (\ref{yas}) is of the right form to be a monomial in $\cqo$, c.f. (\ref{onlyainvs}), but we are by no means done. A priori, it is perhaps not even clear from the procedure above that the $g_i^n$ need all be non-negative: indeed, although we stated the proposition for identities involving three terms, the obvious generalization to arbitrary identities of the form (\ref{lid}) is valid, but the resulting $g_i^n$ are not all non-negative in general. Nonetheless,
\begin{proposition}    
\label{quadrmonom}
Under the assumptions of the preceding proposition, the monomial \be Y_{\bar\imath_2,aq^r} Y_{i_3,aq^s}^{-1}\ee of (\ref{yas}) occurs in $\cqo$. \label{inchiq}    
\end{proposition}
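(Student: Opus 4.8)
The plan is to realise the formal factorisation supplied by proposition \ref{routeprop}, namely $Y_{\bar\imath_2,aq^r}Y_{i_3,aq^s}^{-1}=Y_{i_1,a}\prod_{(j,t)\in\Ihw}(A_{j,aq^t})^{-g_j^t}$, as an honest chain of lowering steps inside $\chi_q(V_{i_1,a})$. First I would record two bookkeeping facts about the $g$ built in that proof. Solving (\ref{sseqn}) upward for the source (\ref{cdef}) with $g$ vanishing below the graph forces $g_j^t=0$ for all $t\ge s$, so that $g$ is supported in the band $0\le t\le s-1$; moreover the bottom source $c_{i_3}^s=+1$ means the single inverse factor $Y_{i_3,aq^s}^{-1}$ of the target is manufactured only at the very last step, by the node $(i_3,s-1)$ through $A_{i_3,aq^{s-1}}^{-1}$. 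I would then descend through $\Ih$ one row at a time, treating together all the white nodes of a given row and certifying each batch of lowerings with proposition \ref{yang}.

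The descent works because of the two-colouring. Write $m=P_{\le t-1}$ for the partial product accumulated from the rows above $t$; it agrees with the target in its settled rows, carries the positive frontier factors $Y_{i,aq^{t-1}}^{g_i^t}$ one row up, and its only inverse factors near the frontier, the dangling $Y_{i,aq^t}^{-g_i^{t-1}}$, sit in the columns of the colour \emph{opposite} to the white nodes $(i,t)$ now to be lowered. Hence $m$ is $i$-dominant in exactly the required directions, with $\beta_i(m)$ carrying $Y_{i,aq^{t-1}}^{g_i^t}$ and no $Y_{i,aq^{t+1}}$, so proposition \ref{yang} applies with $n=g_i^t$. Its alternative ii) is excluded at every step: because $m$ has no factors below its frontier, any monomial $mA_{i,aq^{t'}}$ with $t'\ge t$ would have its rightmost factor $Y_{i,aq^{t'+1}}$ positive and hence fail to be right negative, contradicting lemma \ref{lemmarightneg} since it is certainly not the highest weight monomial $Y_{i_1,a}$. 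Thus case i) holds, the monomials $m(A_{i,aq^t})^{-p}$ for $1\le p\le g_i^t$ all occur, and taking $p=g_i^t$ simultaneously across the row advances the frontier by one. Iterating to $t=s-1$ and performing the terminal lowering at $(i_3,s-1)$ produces $Y_{\bar\imath_2,aq^r}Y_{i_3,aq^s}^{-1}$. The two source rows are harmless: the positive factor $Y_{\bar\imath_2,aq^r}$, once created, is never cancelled and never obstructs dominance, while $Y_{i_3,aq^s}^{-1}$ is born only at the terminal step, so no intermediate monomial fails to be $i_3$-dominant.

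The single hypothesis on which this descent rests, and which I expect to be the principal difficulty, is that every frontier exponent is non-negative, i.e.\ that $g_j^t\ge 0$ for all white $(j,t)$. This cannot be waived: proposition \ref{yang} speaks only of dominant monomials, and a negative $g_j^t$ would make the right-hand side of the factorisation contain a positive power of some $A_{j,aq^t}$, which (\ref{onlyainvs}) excludes from every monomial of $\chi_q(V_{i_1,a})$. As flagged in the remark preceding the statement, non-negativity genuinely fails for the $g$ attached to a general identity of the shape (\ref{lid}); it is a property special to the three-term identities (\ref{fz}) subject to (\ref{ineqnm}) and $r<s$, and proving it is where the real content lies.

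To establish $g_j^t\ge 0$ I would argue directly from the discrete equation (\ref{sseqn}) for the three-source data (\ref{cdef}), proving the stronger, self-propagating statement that the frontier stays dominant after each batch of lowerings. The cleanest route is probably to run the descent \emph{adaptively}: at each dominant frontier lower by the full multiplicity visible in the current monomial (which is automatically $\ge 0$), and show row by row that these multiplicities are forced to equal the $g_j^t$ and that the resulting new frontier is again dominant, so that non-negativity of $g$ emerges as an output rather than an input. The geometric constraints (\ref{anglesrel2})--(\ref{ineqnm}) and the picture of the projected Coxeter orbits (figure \ref{s1pic}) should feed into this as the reason the frontier cannot develop an inverse factor prematurely; indeed one expects $g_j^t$ to admit a manifestly non-negative description as a count of Coxeter-rotated weights lying in the angular sector cut out by the three vectors of (\ref{fz}). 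Verifying this dominance-propagation, and thereby closing the induction, is the crux of the proof.
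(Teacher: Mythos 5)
Your overall strategy --- realising the factorisation of proposition \ref{routeprop} as a row-by-row chain of lowerings certified by proposition \ref{yang} --- is the same as the paper's, and you have correctly located the crux: non-negativity of the $g_j^t$. But you do not prove it. Your final paragraph only proposes to ``run the descent adaptively'' and to show that the visible multiplicities ``are forced to equal the $g_j^t$'', and it ends by conceding that verifying this dominance-propagation ``is the crux of the proof''; the suggested reformulation of $g_j^t$ as a count of Coxeter-rotated weights in an angular sector is likewise not carried out. The paper closes this gap by a concrete comparison argument: it first runs the \emph{full} alternating lowering sequence $m'_0,\dots,m'_{h-1}$ from $Y_{i_1,a}$ down to the lowest monomial $Y_{\bar\imath_1,aq^h}^{-1}$, whose exponents $g'{}_j^t$ solve (\ref{sseqn}) for the single source $c'{}_{i_1}^0=+1$ and are non-negative simply because they are exponents occurring in genuine monomials of the form (\ref{onlyainvs}). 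Since $c$ and $c'$ agree in all rows above $r$, uniqueness of the solution gives $g_j^t=g'{}_j^t\geq 0$ for $t\leq r$; the rows $t\geq r$ are then handled by turning the diagram upside-down and running the identical comparison from the $+1$ source in row $s$. This two-sided trick is exactly what fails for identities with more than three terms, which is why non-negativity is special to (\ref{fz}) and cannot be waved through.

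There is also an internal inconsistency in the adaptive descent as you describe it, concentrated at the node $(\bar\imath_2,r+1)$. Because of the $-1$ source at $(\bar\imath_2,r)$, equation (\ref{sseqn}) forces the multiplicity of $Y_{\bar\imath_2,aq^r}$ actually visible in the partial product at row $r$ to be $g_{\bar\imath_2}^{r+1}+1$, not $g_{\bar\imath_2}^{r+1}$ (this is relation (\ref{ago}) in the paper). Lowering ``by the full multiplicity visible in the current monomial'' would therefore cancel the very factor $Y_{\bar\imath_2,aq^r}$ you need to keep and would send you down the full-lowering sequence towards the lowest weight monomial: the full multiplicities are \emph{not} equal to the $g_j^t$ at that node. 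The descent has to be deliberately modified at row $r$ to lower by one power less in the direction $\bar\imath_2$ --- which is legitimate precisely because proposition \ref{yang}(i) delivers all the intermediate powers $mA_{aq}^{-p}$, $1\leq p\leq n$ --- and thereafter one must check at each subsequent row that the surviving prefactor $Y_{\bar\imath_2,aq^r}$ does not spoil $\bar\imath_2$-dominance. So your statement that this factor, ``once created, is never cancelled'' is not automatic; it is the one step of the construction that must be arranged by hand, and it rests on the same non-negativity statement you left unproved.
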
    
\begin{proof}    
First consider the following iterative procedure which generates a finite sequence $m'_0,m'_1,\dots, m'_{h-1}$ of monomials in $\cqo$. We set $m'_0= Y_{i_1,a}$. Roughly speaking, the idea is to lower fully in all black directions to obtain $m'_1$, then lower fully in all white directions to obtain $m'_2$, and so on. More precisely, suppose that for some even $p\geq 0$ we have found an $m'_p$ in $\cqo$ of the form    
\be m'_p = \left(\prod_{i\in I_\bl} Y_{i,aq^{p}}^{b_i} \right)    
          \left(\prod_{i\in I_\wh} Y_{i,aq^{p+1}}^{b_i} \right)^{-1}\label{mp} \ee    
for some non-negative integers $b_i$, $i\in I$. Certainly (c.f. \ref{onlyainvs})     
\be m'_p=Y_{i_1,a} \prod_{(j,t) \in \Ihw} \left(A_{j,aq^t}\right)^{-g'{}_j^t}\label{mp2}\ee     
for some $g'{}_j^t\geq 0$ with, in view of (\ref{mp}), $g'{}_j^t=0\, \forall\, t>p
$. Thus for all $k>0$ and $i\in I$, $m'_p A_{i,aq^{p+k}}$ is not of the form (\ref{onlyainvs}) and so cannot be in $\cqo$. Proposition \ref{yang} thus guarantees that $m'_p A_{i,aq^{p+1}}^{-b_i}$ is in $\cqo$, for $i\in I_\bl$. By similar reasoning for each black direction in turn, we have that $\cqo$ contains     
\be m'_{p+1} = m'_p \prod_{i\in I_\bl} A_{i,aq^{p+1}}^{-b_i}.\ee    
It too is of the form (\ref{mp}), but with $p$ odd and the roles of black and white exchanged. With the obvious colour swaps, we then iterate.      
    
As stated, the iteration proceeds until we arrive at the lowest monomial $m'_{h-1} = Y_{\bar\imath_1, aq^h}^{-1}$ of $\cqo$.\footnote{This sequence of ``lowering steps'' is of the general type mentioned in \cite{HernandezMinAff}, remark 2.16. Note that this particular sequence picks out a route through the graph of $\cqo$, from the highest to the lowest monomial, that avoids non-trivial $\uqslt$ Kirillov-Reshetikhin modules, in the sense that at each lowering step the relevant $\uqslt$-character is that of an (irreducible) tensor product of fundamental representations at coincident rapidity. It is also interesting to note that the monomials $m'_0, m'_1,\dots, m'_{h-1}$ have the property that the sequence of their classical weights is a permutation of the Coxeter orbit of the highest weight $\lambda_{i_1}$.}  The key observation is that, for all $p\leq h-1$, the  $g'{}_j^t$ of (\ref{mp2}) solve the problem (\ref{sseqn}) in rows $1,2\dots, p-1$, for the source function $c'$ defined to be zero everywhere except for $c'{}_{i_1}^0 = +1$, and the initial conditions $g'{}_i^0=0$ $\forall i \in I$. 
    
Note that for all $p\leq h-1$ the  $g'{}_j^t$ of (\ref{mp2}) are non-negative in rows $1,2,\dots,p-1$; this is clear from their character-theoretic construction, and is a fact about the solution to (\ref{sseqn}) for the source $c'$ and initial conditions $g'{}_i^0=0$ $\forall i \in I$ that is not otherwise manifest.

Now let $g$ and $c$ be the functions of the proof of proposition \ref{routeprop}.     
Since    
\be \forall n< r,\quad c_i^n = c'{}_i^n\ee    
and because each $g_i^n$ is determined by the values of $c$ and $g$ in rows above (when we think of solving from row 0 downwards), we have    
\be \forall n\leq r,\quad g_i^n = g'{}_i^n.\label{aeq}\ee    
In particular, the $g_i^n$ are non-negative for all $n\leq r$. On the other hand, by imagining turning the diagram upside-down and applying the same argument starting from the $+1$ source in row $s$, we conclude also that the $g_i^n$ are non-negative for all $n\geq r$. Therefore all the $g_i^n$ are non-negative. (Note that this trick would not work if $c$ were non-zero at more than three nodes.) Furthermore, again thinking of solving from row 0 downwards,    
\be c'{}_{\bar\imath_2}^r = c_{\bar\imath_2}^r+1 \quad\implies\quad    
 g'{}_{\bar\imath_2}^{r+1} = g_{\bar\imath_2}^{r+1}  +1 > 0.\label{ago}\ee    
This relation is crucial, because if we are to obtain the desired quadratic monomial (\ref{yas}), we must modify the procedure on reaching row $r$: we set $m_1=m'_1,\dots,m_{r}=m'_{r}$, but then rather than lowering $m_r$ completely in the direction $\bar\imath_2$, we want to preserve one factor of $Y_{\bar\imath_2,aq^r}$ -- and the above inequality guarantees that there \emph{is} at least one such factor. That is, if     
\be m_r = \left(\prod_{i\in I_\bl} Y_{i,aq^{r}}^{b_i} \right)    
          \left(\prod_{i\in I_\wh} Y_{i,aq^{r+1}}^{b_i} \right)^{-1}\label{mr},\ee    
supposing in what follows, for the sake of definiteness, that $\bar\imath_2\in I_\bl$, then we are guaranteed that $b_{\bar\imath_2}\geq 1$. Setting $n=b_{\bar\imath_2}$, $p=b_{\bar\imath_2}-1$ in proposition \ref{yang} we deduce that      
\be m_{r+1} := 
m_r  A^{-b_{\bar\imath_2}+1}_{\bar\imath_2, aq^{r+1}} \prod_{i\in I_\bl\setminus \{\bar\imath_2\}} A_{i,aq^{r+1}}^{-b_i} 
 =  Y_{i_1,a} \prod_{(j,v) \in \Ihw: t\leq {r+1}} \left(A_{j,aq^t}\right)^{-g{}_j^t} \label{rowr}\ee    
is a monomial in $\cqo$.
We would then like to continue to apply the above alternating black/white lowering procedure in subsequent rows, preserving the prefactor $Y_{\bar\imath_2,aq^r}$ at each step. Once more we shall argue that this is possible by a finite recursion. 
Consider a white lowering step: suppose that for some odd $p$ with $s>p\geq r+1$ we have shown that 
\be  m_p := Y_{i_1,a} \prod_{(j,t) \in \Ihw: t\leq p} \left(A_{j,aq^t}\right)^{-g{}_j^t} =
     Y_{\bar\imath_2,aq^r}\left(\prod_{i\in I_\wh}Y_{i,aq^p}^{b'_i} \right)\left(\prod_{i\in I_\bl} Y_{i,aq^{p+1}}^{b'_i}\right)^{-1}\ee    
is a monomial in $\cqo$, for certain  $b'_i\in \mathbb Z$, $i\in I$. To begin the recursion, this is certainly true for $p=r+1$, as in (\ref{rowr}). Now observe that in fact, for all $i\in I_\wh$, $b'_i= g_i^{p+1}$ (this is clear when thinking of solving for $g$ row-by-row from row 0 downwards) and that these are non-negative as noted above. Thus we can lower in all white directions as before and find that 
\be m_{p+1} := m_p\prod_{(j,p+1) \in \Ihw} \left(A_{j,aq^{p+1}}\right)^{-g{}_j^{p+1}}\ee
is also a monomial in $\cqo$. This completes the white inductive step. For the black step, lowering in the directions $I_\bl\setminus\{\bar\imath_2\}$ works in exactly the same way. It remains only to check that the lowering step in the direction $\bar\imath_2$ is also valid: but this is clear because $m_{p+1}$ is an $\bar\imath_2$-dominant monomial and $\beta_{\bar\imath_2}(m_{p+1}) = Y_{\bar\imath_2,aq^r} Y_{\bar\imath_2,aq^{p+1}}^{n}$ with $n=g_{\bar\imath_2}^{p+2}\geq 0$, which is still of the correct form to apply proposition \ref{yang}. Iterating, we have that every monomial in the sequence
\be  Y_{i_1,a} \prod_{(j,v) \in \Ihw: t\leq p} \left(A_{j,aq^t}\right)^{-g{}_j^t}    
               \quad\text{for} \quad p = 1,2,\dots,s\ee
is in $\cqo$.   
Finally then, at row $s$, we indeed arrive at    
\be  Y_{i_1,a} \prod_{(j,t) \in \Ihw} \left(A_{j,aq^t}\right)^{-g{}_j^t} =  Y_{\bar\imath_2,aq^r}  Y_{i_3,aq^s}^{-1},\ee    
which is the required monomial.\finproof    
\end{proof}    
    
    
\section{Outlook}\label{outlook}    
It is an immediate corollary of our main result, theorem \ref{sec3thm}, that Dorey's rule provides a \emph{necessary} condition for $\Hom_{\uqgh}\left( V_{i,a} \otimes V_{j,b} \otimes V_{k,c}, \mathbb   C\right) \neq 0$. We have not, however, given a general proof here of sufficiency; and it may be that such a proof would require more knowledge about the structure of fundamental $\uqgh$-modules than their $q$-characters alone provide.  The correct statement should be the following. Under the conditions of theorem \ref{sec3thm}, 
the ordered triple of vectors  $(w^{n_1} \lambda_{i_1}, w^{n_2} \lambda_{i_2}, w^{n_3} \lambda_{i_3})$ can be said to be either \emph{cyclic} or \emph{acyclic}
according to the order in which their projections occur in the oriented $s=1$ eigenplane of $w$, c.f. (\ref{thetadef}). In the example following the theorem, $(w^{-2}\lambda_{\mathbf{\bar l}},\lambda_\mathbf{L},w^5\lambda_{\mathbf{h}})$ is cyclic, for instance. It should be that, in the cyclic case, $\Hom_{\uqgh} (\mathbb C,  V_{i_1,a_1} \otimes V_{i_2,a_2} \otimes V_{i_3,a_3}) \neq 0$ and $\Hom_{\uqgh} (V_{i_3,a_3} \otimes V_{i_2,a_2} \otimes V_{i_1,a_1}, \mathbb C) \neq 0$.     
(For the $\mathfrak{a}$- and $\mathfrak d$-series, one may verify that this statement indeed unpacks to give theorems 6.1 and 7.1 of \cite{CP95}. There the proof proceeds by induction on the rank, and relies on specific properties of these root systems.)  
  
Now, as mentioned in section 2, there is a ``current'' Hopf algebra structure on $\uqgh$, originally due to Drinfel'd. It restricts, over the quantum loop algebra, to the following relations:    
\be \Delta (\phi^\pm_i(u)) = \phi^\pm_i(u) \otimes \phi^\pm_i(u) \label{deltaphi}\ee    
\be \Delta (x^+_i(u)) =1 \otimes x^+_i (u) +  x^+_i(u) \otimes \phi^-_i(1/u) \label{deltax+}\ee    
\be \Delta (x^-_i(u)) = x^-_i(u) \otimes 1 + \phi^+_i(1/u) \otimes x^-_i(u) \label{deltax-}\ee    
\be S (\phi^\pm_i(u)) = \phi^\pm_i(u)^{-1}\ee    
\be  S (x^+_i(u)) =- x^+_i (u) \phi^-_i(1/u)  \qquad  S (x^-_i(u)) = -  \phi^+_i(1/u) x^-_i(u) \ee    
\be \epsilon (\phi^\pm_i(u)) = 1  \qquad \epsilon (x^\pm_i(u)) = 0, \ee    
where $\Delta$ is the coproduct, $S$ the antipode and $\epsilon$ the counit. This Hopf algebra structure is twist-equivalent to the standard one in a sense given in \cite{EKP}; note that the infinite sums on the right of the coproducts above require careful interpretation \cite{HernandezFusionI,HernandezFusionII,Grosse}.    
With respect to this ``current'' Hopf algebra structure, it is clear that the singlet state in  $V_{i_1,a_1} \otimes V_{i_2,a_2} \otimes V_{i_3,a_3}$ must be of the form $\ket{\smash{Y_{\bar \imath_1,a_1q^h}^{-1}}} \otimes \ket{Y_{\bar\imath_1,a_1q^h}Y^{-1}_{i_3,a_3}} \otimes \ket{Y_{i_3,a_3}}$ -- where the first and last tensor factors are the lowest and highest weight vectors of the respective representations, and the middle factor is an eigenvector of $\phi^\pm_i(u)$ with l-weight corresponding to the monomial shown.

Finally, let us remark that it would be interesting to investigate whether generalizations of our results exist for representations other than the fundamental ones (as was suggested in \cite{EKMY} based on the structure of local charges in certain integrable sigma models). The natural candidates are the Kirillov-Reshetikhin modules $W^{(k)}_{i,a}$, which can be thought of as the ``minimal affinizations'' \cite{Chari:1994rv} of the highest weight representations $V_{k\lambda_i}$ of $\g$ and for which the Frenkel-Mukhin algorithm is known to work \cite{Nakajima:2002fz,HernandezKR}. The form of our arguments suggests that such generalizations may be possible, perhaps using the braid group actions of \cite{BeckBraid,ChariBraid} to lift the periodicity of the Coxeter element.    
    
\vspace{1cm}   
  
\textsl{Acknowledgements.} \,\, We are grateful to Patrick Dorey and Niall MacKay for valuable discussions and suggestions. During much of the preparation of this work, C.A.S.Y. was funded by the Leverhulme trust and R.Z. by an EPSRC postdoctoral fellowship.  C.A.S.Y. is funded by a fellowship from the Japan Society for the Promotion of Science.

\bibliography{DR_CMPv3}    
\bibliographystyle{alpha}    
\end{document}